\newtheorem{theorem}{Theorem}[section]
\newtheorem{lemma}[theorem]{Lemma}
\newtheorem{proposition}[theorem]{Proposition}
\theoremstyle{definition}
\newtheorem{definition}{definition}[section]
\theoremstyle{remark}
\newtheorem{remark}[definition]{Remark}
\newtheorem{assumption}[definition]{Assumption}
\numberwithin{equation}{section}
\newcommand{\norm}[1]{\left\lVert#1\right\rVert}
\newcommand{\R}{\mathbb{R}}
\newcommand{\N}{\mathbb{N}}
\DeclareMathOperator{\spt}{spt}
\DeclareMathOperator{\dist}{dist}
\newcommand\res{\mathop{\hbox{\vrule height 7pt width .3pt depth 0pt\vrule height .3pt width 5pt depth 0pt}}\nolimits}
\newcommand{\bC}{\mathbf{C}} %cilindri
\newcommand{\bG}{\mathbf{G}} %grafico
\def\a#1{\left\llbracket{#1}\right\rrbracket}
\title{Stationary and stable varifolds with singularities}
\author[C. De Lellis]{Camillo De Lellis}
\address{Institute for Advanced Study, 1 Einstein Drive, Princeton NJ 08540, USA}
\email{camillo.delellis@math.ias.edu}
\author[J. Hirsch]{Jonas Hirsch}
\address{Mathematisches Institut, Universit\"at Leipzig, Augustusplatz 10, D-04109 Leipzig, Germany}
\email{hirsch@math.uni-leipzig.de}
\author[L. Spolaor]{Luca Spolaor}
\address{Department of Mathematics, UC San Diego, AP\&M, La Jolla, California, 92093, USA}
\email{lspolaor@ucsd.edu}
\date{August 2025}
\begin{document}

\begin{abstract}
We construct minimal $m$-dimensional immersions in $\R^{m+1}$, equipped with a $C^{1, \alpha}$ metric, $\alpha\in [0,1)$, with a sequence of \emph{catenoidal necks} or \emph{floating disks} converging to an isolated, multiplicity $2$, singular flat point. 
\end{abstract}

\maketitle

\section{Introduction}

After the celebrated work of Allard \cite{All} on the interior singular set of stationary integral varifolds, the question of its optimal dimension and structure has attracted a lot of attention in the last 50 years, see for instance \cite{BMW,Brakke, BDF,  CCSvarifolds,DelellisAllard, DPGS,HirschSpolaor2024, HS, MenneJGA, SavinAllard,SW}. In particular, as an outcome of these works, it is clear that the main obstruction to a proof of a bound on the size of the singular set for stationary integral varifolds is the possible presence of topology on the smooth part of the varifold accumulating to flat singularities\footnote{The analogous issue for stationary varifolds which are stable on the regular part is the accumulation of classical, non immersed, singularities to a singular flat point.}. The aim of this short note is to confirm that this scenario does indeed happen when the ambient metric has low regularity, by proving the following two statements.

\begin{theorem}\label{t:uno} Fore every $\alpha \in [0,1)$, there exists a metric $g\in C^{1,\alpha}(\R^{m+1})$ and an $m$-dimensional integral current $T$ in $\R^{m+1}$, stationary with respect to $g$, such that 
\begin{enumerate}
    \item $T=\a{\Sigma}$ for a minimal immersed surface $\Sigma$ in $\mathbf{B}_1 (0) \setminus \{0\}$;
    \item the varifold tangent cone to $\Sigma$ at $0$ is $2 \a{\R^m\times \{0\}}$;
    \item the genus of $\Sigma \cap \mathbf{B}_r (0)\setminus \{0\}$ is infinite for every $r>0$.
\end{enumerate}
\end{theorem}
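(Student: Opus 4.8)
The plan is a singular perturbation (gluing) construction. We build $\Sigma$ as a connected surface obtained from two coincident copies of the flat disk $(\R^m\times\{0\})\cap\mathbf{B}_1(0)$ by removing from each a sequence of small disks centred at points $q_k$ with $|q_k|\sim\rho_k:=2^{-k}$ and inserting in their place small \emph{catenoidal necks}, i.e.\ rescalings by factors $\epsilon_k=o(\rho_k)$ of the $m$-dimensional catenoid (the classical catenoid when $m=2$); we then choose the ambient metric $g$ so that the resulting immersion is exactly $g$-stationary. Since each neck inserted at a scale $\rho_k<r$ adds one handle to $\Sigma\cap\mathbf{B}_r(0)\setminus\{0\}$, conclusion (3) is immediate; since $\epsilon_k/\rho_k\to0$ the rescalings of $\Sigma$ at $0$ converge as varifolds to $2\a{\R^m\times\{0\}}$, the rescaled necks shrinking to points and carrying vanishing mass, which is (2); and $T=\a{\Sigma}$ has locally finite mass and, as $\{0\}$ is $\mathcal H^{m-1}$-null, no boundary in $\mathbf{B}_1(0)$, hence is an integral current, stationary as soon as the immersion is $g$-minimal. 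So the whole content is to carry out the construction with a metric $g\in C^{m-1,\alpha}(\R^{m+1})$ equal to the Euclidean one outside a compact set.

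\textbf{Step 1: the approximate immersion.} Introduce transition scales $\sigma_k$ with $\epsilon_k\ll\sigma_k\ll\rho_k$, chosen so that the annuli $\{\sigma_k/2<|x-q_k|<\sigma_k\}$ are mutually disjoint and stay away from $0$. Away from the $q_k$, let $\Sigma_0$ be the union of the two sheets $\{x_{m+1}=\pm w(x)\}$, where $w>0$ is small, vanishes at $0$, solves the Euclidean minimal surface equation up to a controlled error, and near each $q_k$ reproduces to leading order the far-field of the $\epsilon_k$-catenoid --- whose two ends are asymptotic to parallel hyperplanes at distance $\sim\epsilon_k$, approached at the polynomial rate $|x-q_k|^{2-m}$ with coefficient $\sim\epsilon_k^{m-1}$ (logarithmically if $m=2$). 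On the annuli one interpolates, via cutoffs, between this catenoid end and the graph of $\pm w$. The outcome is a smooth immersed --- indeed embedded away from $0$ --- surface $\Sigma_0\subset\mathbf{B}_1(0)\setminus\{0\}$ whose Euclidean mean curvature $H_{\Sigma_0}$ is supported in the transition regions, with size there bounded by a suitable power of the parameters (of order $\epsilon_k\rho_k^{-2}$, up to smaller contributions from the cutoffs); and $w(x)=o(|x|)$ as $x\to0$, which forces the flat multiplicity-$2$ tangent cone, is elementary given $\epsilon_k=o(\rho_k)$.

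\textbf{Step 2: choosing the metric, and its regularity.} Rather than perturbing $\Sigma_0$, we perturb the metric: we look for $g=g_{\mathrm{eucl}}+h$ with $h$ a small symmetric $2$-tensor supported where $H_{\Sigma_0}\ne0$ (hence away from the necks, on which $\Sigma_0$ is already an exact Euclidean catenoid) and with $H_g(\Sigma_0)\equiv0$. The dependence of $H_g$ on the $1$-jet of $h$ along $\Sigma_0$ is, to leading order, a single scalar equation in the many components of $\partial_\nu h$ and is therefore surjective; one can thus prescribe $h$ pointwise on the transition regions with $|h|$ bounded by $|H_{\Sigma_0}|$ times a length scale, and absorb the quadratic remainder by a contraction argument --- no Jacobi-field obstruction of the kind familiar in surface gluings appears, precisely because here the unknown is the metric, not the surface. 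The pieces of $h$ cluster at $0$ along the scales $\rho_k$; tuning the decay of the parameters $\epsilon_k,\sigma_k$ (while keeping $\epsilon_k=o(\rho_k)$) makes all derivatives of $h$ up to order $m-1$ bounded and $\nabla^{m-1}h$ moreover $\alpha$-H\"older for the prescribed $\alpha\in[0,1)$, so that $g\in C^{m-1,\alpha}(\R^{m+1})$ and equals $g_{\mathrm{eucl}}$ outside a compact set --- the index $m-1$ reflecting the $|x|^{2-m}$ decay of the $m$-dimensional catenoid ends, and $\alpha<1$ being a genuine limitation of the scheme. Feeding $h$ back, the contraction closes and produces an exactly $g$-minimal immersed surface $\Sigma$ close to $\Sigma_0$, satisfying (1)--(3) as recorded in the first paragraph.

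\textbf{Main obstacle.} The difficulty is concentrated in the bookkeeping of Steps 1--2 near the accumulation point $0$: the perturbation $h$ must be small (so the contraction closes and the necks stay exact Euclidean catenoids), it is forced to live in regions shrinking toward $0$ at the scales $\rho_k$, and yet it must extend across $0$ with enough regularity. Reconciling these three demands is exactly what pins the regularity of $g$ at $C^{m-1,\alpha}$ with $\alpha<1$; the remaining, more routine point is to carry out the catenoid-to-sheet gluing of Step 1 consistently across the infinitely many scales so that $H_{\Sigma_0}$ has the stated localization and size.
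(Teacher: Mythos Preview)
Your high-level plan---insert catenoidal necks at shrinking scales and absorb the gluing error into the metric---matches the paper, but Step~1 contains a gap that is not merely technical.

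You take the two sheets as $\{x_{m+1}=\pm w\}$ with $w>0$ small and $w(0)=0$, and you assert that $H_{\Sigma_0}$ is supported only in the transition annuli around the~$q_k$. For that localization you need $w$ to solve the minimal surface equation \emph{exactly} on the complement of those annuli; yet you also write that $w$ solves it only ``up to a controlled error''. These are not compatible. If you insist on exact minimality, then on the large portion of each dyadic annulus $\{|x|\sim\rho_k\}$ lying away from the single neck (your transition radii satisfy $\sigma_k\ll\rho_k$), $w$ is a positive exact minimal graph tending to~$0$ at an interior accumulation point of its domain---and producing such a $w$ is precisely the step you have skipped. If instead $w$ is only approximately minimal, then $H_{\Sigma_0}$ is nonzero on the whole horned neighbourhood of~$0$, your metric correction $h$ is no longer localized to the transition annuli, and the regularity count in Step~2 collapses.

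There is a deeper reason to be suspicious: with $w>0$ your $\Sigma_0$ is \emph{embedded} in $\mathbf{B}_1\setminus\{0\}$, and the paper states explicitly that an embedded codimension-one example is a well-known open problem. The paper's actual base is the pair of sheets $\{x_{m+1}=\pm u\}$ for a \emph{sign-changing} exact solution $u$ of the Euclidean minimal surface equation, modeled on a homogeneous harmonic polynomial of high degree~$N$ (for $m=2$ one takes $u\sim\Re(z^N)$; in higher dimensions $u$ is produced via Cauchy--Kovalevskaya from a suitable spherical harmonic). The two sheets then cross transversally along $\{u=0\}$, so the surface is genuinely only immersed; the necks are inserted at points $r_{0,k}e_1$ where $u\approx r_{0,k}^N>0$, and because $u$ is an exact solution on the full disk the mean curvature after gluing really is supported only in the transition annuli. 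The sign change is what makes the localization possible---it is not cosmetic.

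A further methodological difference in Step~2: the paper does not run an abstract contraction for a general symmetric tensor~$h$. It writes the perturbation as a \emph{conformal} factor $g=\lambda^2\delta$, where $\lambda$ is the comass of an explicit closed $m$-form $\omega$ (built in normal coordinates over the glued graph via a Poincar\'e-type primitive) satisfying $\omega|_{\mathbf{G}_v}=d\mathrm{vol}_v$. Then $\omega$ is a calibration in the new metric, so stationarity is immediate, and the regularity reduces to the pointwise bound $|D^j(\lambda-1)|\lesssim H^{m-j}$ in each gluing block; the $C^{m-1,\alpha}$ conclusion then follows by scaling once $N$ is taken large enough.
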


%Furthermore there is a modification of the above to stationary varifolds adding stability on the regular set i.e. 

\begin{theorem}\label{t:due} For every $\alpha \in [0,1)$, there exist:
\begin{itemize}
    \item[(a)] a metric $g\in C^{1,\alpha}(\R^{m+1})$; 
    \item[(b)] an $m$-dimensional integral varifold $V$ in $\R^{m+1}$, stationary with respect to $g$, 
    \item[(c)] a sequence of pairwise disjoint embeddings $\gamma_k$ of $\mathbb S^{m-1}$,
    \item[(d)] and an immersed stable minimal surface $\Sigma$ in $\mathbf{B}_1 (0) \setminus (\bigcup_k \gamma_k \cup \{0\})$,
\end{itemize}    
such that $V= \Sigma$ and
\begin{enumerate}
    \item $\gamma_k$ accumulates towards the origin and $\sum_k \mathcal{H}^{m-1} (\gamma_k) < \infty$; 
    \item the tangent cone to $V$ at $0$ is $2 \a{\R^m\times \{0\}}$;
    \item for every $k$ there is a neighborhood of $\gamma_k$ in which $\Sigma$ consists of three connected components meeting at $\gamma_k$ with equal angles.
\end{enumerate}
\end{theorem}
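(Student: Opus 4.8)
The plan is to \emph{prescribe an immersion carrying all the required geometric features by hand, and then construct a $C^{m-1,\alpha}$ metric for which it is stationary and stable}, rather than to solve the minimal surface system in the flat metric (which is precisely the hard problem one wants to circumvent). Concretely, fix a sequence of scales $r_k\downarrow 0$ tending to zero fast, and for each $k$ a ``floating disk'' datum: a small round ball $B_k\subset \R^m\times\{0\}$ of radius $\rho_k$ with $\rho_k\ll r_k$ and $\dist(B_k,0)\sim r_k$, a height $c_k$ with $\abs{c_k}\to 0$, and the embedded sphere $\gamma_k:=\partial B_k\times\{c_k\}\cong\mathbb S^{m-1}$; the $B_k$ are pairwise disjoint and accumulate only at $0$. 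The model surface $\Sigma_0$ consists of: (i) two ``bulk'' sheets $u^\pm$, graphs over $(\R^m\times\{0\})\setminus\bigcup_k B_k$ with $u^\pm\to 0$ near the origin; (ii) over each $B_k$ a flat ``floating disk'' at height $c_k$; and (iii) near each $\gamma_k$ an \emph{exactly flat-stationary} configuration modeled on $(\text{flat }Y)\times\R^{m-1}$, obtained by capping $B_k$ with the flat disk and attaching to $u^+$, $u^-$ along $\gamma_k$ two minimal-graph ``connectors'', so that the three sheets meet at equal $120^\circ$ angles (vanishing of the first variation at $\gamma_k$ is automatic once the three conormals are $(-1,0)$ and $(\tfrac12,\pm\tfrac{\sqrt 3}{2})$). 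Outside a fixed-ratio neighborhood of each $\gamma_k$ and of $0$ one arranges $\Sigma_0$ so that its mean curvature is small and supported in the ``transition'' annuli $\{r_{k+1}\lesssim\abs{x}\lesssim r_k\}$, where the two bulk sheets and the disk are mutually separated in $\R^{m+1}$ at distances comparable to $r_k$. Constructing the exactly flat-stationary cap near each $\gamma_k$ is a finite-scale gluing problem, solved by an implicit function argument based at the cone $(\text{flat }Y)\times\R^{m-1}$ after rescaling by $\rho_k$.

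For the metric, write $g=\bar g+h$ with $\bar g$ Euclidean. Since $\Sigma_0$ is already flat-stationary near every $\gamma_k$ and near $0$, the correction $h$ may be taken supported in the transition annuli; there the separation of the sheets decouples the requirement that $u^+$, $u^-$ and the disk all be $g$-stationary into, to leading order, a linear prescribed-mean-curvature problem for $h$ concentrated near each sheet, solvable scale by scale with estimates that are uniform after rescaling. Summing the local solutions yields $h$, with $g$ positive definite. The amplitude of the scale-$k$ contribution to $h$ is controlled by $\rho_k$, $c_k$, $r_k$; choosing the ratios $\rho_k/r_k$, $c_k/r_k$ and the speed of $r_k\downarrow 0$ appropriately makes $\norm{h}_{C^{m-1,\alpha}}<\infty$ for any prescribed $\alpha\in[0,1)$, while the scale-$k$ lower bounds show $h$ is in general no smoother than $C^{m-1,\alpha}$ at the origin — consistent with the regularity theory, which forbids such accumulation for sufficiently regular metrics.

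Properties (1)--(3) are then immediate: $\sum_k\mathcal{H}^{m-1}(\gamma_k)\sim\sum_k\rho_k^{m-1}<\infty$; the tangent cone of $V=\Sigma$ at $0$ is $2\a{\R^m\times\{0\}}$ because $u^\pm\to 0$ and $\rho_k,c_k\to 0$; and near each $\gamma_k$, $\Sigma$ is by construction three sheets at $120^\circ$. For stability, one uses that on every scale $\Sigma$ is a small $C^2$-perturbation of configurations that are stable with a scale-invariant gap — flat hyperplanes, flat disks, and the stable cone $(\text{flat }Y)\times\R^{m-1}$ — and that both the model's curvature error and $h$ are small in the relevant norms; a localized second-variation estimate over dyadic annuli, combined with the strict stability of the multiplicity-two plane arising as the tangent cone at $0$, gives that the second variation of area is nonnegative.

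The crux is to perform the correction \emph{globally and with precisely the right regularity at $0$}: a single metric must make all sheets stationary at once — which is exactly what forces the arrangement that the three sheets meet in an exactly flat-stationary junction and separate quantitatively in the transition regions — while the infinitely many scale-$k$ corrections pile up at the origin at exactly the Hölder rate $\alpha$, neither faster (which would contradict regularity theory) nor slower (which would make the series diverge). Relative to Theorem~\ref{t:uno}, the extra point here is that the local building block is the \emph{stable} triple junction rather than the unstable catenoidal neck, and that global stability must persist despite the accumulation of singularities at $0$; verifying this is the step I expect to demand the most care.
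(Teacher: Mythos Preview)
Your overall strategy---build the surface first, then perturb the metric to make it stationary---is the same as the paper's, and your picture of the geometry (two sheets converging to a double plane, with floating disks inserted along a sequence accumulating at the origin, each disk meeting the two sheets at a $120^\circ$ junction) is correct. There are, however, substantive differences in how the paper executes each step, and one of your steps has a genuine gap.

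\medskip

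\textbf{The local model.} The paper does not obtain the ``floating disk'' by an implicit-function argument at the cone $(\text{flat }Y)\times\R^{m-1}$; it writes down an \emph{explicit} rotationally symmetric Euclidean-minimal model consisting of two pieces of the higher-dimensional catenoid meeting a flat disk along a round sphere at equal angles. The catenoidal ends are what make the gluing to the bulk sheets quantitative: their gradient decays like $(\rho/R)^{m-1}$, and this exponent $m-1$ is precisely the source of the $C^{m-1,\alpha}$ regularity of the metric. Your proposal asserts the regularity $C^{m-1,\alpha}$ can be achieved by ``choosing the ratios appropriately'' but never identifies \emph{why} $m-1$ is the relevant threshold; in the paper it is the decay rate of the catenoid, not a free parameter.

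\medskip

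\textbf{The metric.} The paper's bulk sheets are two \emph{exactly Euclidean-minimal} graphs tangent to order $N\gg 1$ at the origin (built from a Weierstrass datum when $m=2$, and from a homogeneous harmonic polynomial in higher dimensions), and the metric perturbation is a \emph{conformal} factor $\lambda$ constructed from a calibration form in a horned tubular neighborhood of the graph. Crucially, $\lambda$ is taken invariant under the reflection $\boldsymbol{s}$ through $\{y_{m+1}=0\}$; this single symmetry forces both graphs (and the flat disks, which lie in the fixed-point set) to be stationary for the \emph{same} metric. Your proposal treats the three sheets as decoupled problems, relying on their mutual separation to localize the correction near each one; that can be made to work, but it is more delicate than the reflection trick, and you do not explain how to avoid the correction near one sheet from disturbing the others.

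\medskip

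\textbf{Stability---the real gap.} The paper does \emph{not} argue stability by perturbing stable configurations. Instead, in any compact $K$ avoiding the origin and the junctions $\gamma_k$, the varifold splits into three mutually disjoint pieces (the two graphs and finitely many flat disks), and each piece is \emph{calibrated} by a closed form of comass~$1$ in the constructed metric; hence each is locally area-minimizing and a fortiori stable. Your argument (``small $C^2$-perturbation of stable configurations with a scale-invariant gap'', plus ``strict stability of the multiplicity-two plane'') is problematic: stability is not an open condition under $C^2$ perturbation of either the surface or the metric, the multiplicity-two plane is degenerately stable (its Jacobi operator has nontrivial kernel, coming from separating the sheets), and patching local second-variation inequalities over infinitely many dyadic scales introduces cross terms that you do not control. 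The calibration approach bypasses all of this.
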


Note that in both cases the metric is at only $C^{1,\alpha}$ for: indeed we are not able to control the Ricci curvature around the origin. This is likely only a limitation of our method of proof and the metric in our specific example is however not too pathological: it is smooth outside the origin, geodesically complete, and the geodesics spray is unique also at the origin.

Moreover, a famous open problem is whether there exists an example as in Theorem \ref{t:uno} in which $\Sigma$ is an {\em embedded} minimal surface. Our method can be adapted to prove embeddedness of $\Sigma$ if the codimension of the surface is at least $2$. 

The surface $\Sigma$ of Theorem \ref{t:due} satisfies the assumptions of the recent beautiful work \cite{Bellettini2025} of Bellettini, \footnote{Strictly speaking the work \cite{Bellettini2025} assumes $g$ to be the Euclidean metric, but it is clear that the  ``pure PDE'' approach od the work goes through with some appropriate amount of regularity for the metric $g$.} in the open set $\mathbf{B}_1 (0)\setminus (\bigcup_k \gamma_k \cup \{0\})$, except for the regularity of the metric at $0$. In \cite{Bellettini2025} the author proves that, if $\Sigma$ is a stable immersion outside a set with finite $\mathcal{H}^{n-2}$ measure\footnote{The optimal assumption in \cite{Bellettini2025} actually that the singular set has zero $2$-capacity in $\overline{\Sigma}$}, then it is locally the union of finitely many minimal graphs: Theorem \ref{t:due} suggests therefore that $\mathcal{H}^{n-1}$-finiteness would not be enough\footnote{In the multiplicity $2$ case $\mathcal H^{n-1}$-measure zero of the singular set of the immersion or ruling out triple junction singularities would be enough by \cite{MiWi}, however our example shows that this is not possible in general}. The example in Theorem \ref{t:due} is in fact a current mod $3$ which is locally area-minimizing outside of the origin, in the sense that for every point $p\neq 0$ there is a an open neighborhood $U_p$ such that $T\res U_p$ is area-minimizing mod $3$\footnote{This property follows from a simple modification of the argument given in \cite[Proof of Proposition 1.8]{DHMSS}.}. However it is certainly not area-minimizing in any neighborhood of $0$ given that the density at $0$ is $2$. 

Besides the optimality of the results in \cite{Bellettini2025}, our interest  in Theorem \ref{t:uno} comes from the famous long-standing problem of improving Allard's interior regularity theorem (cf. \cite{All}) and the recent results \cite{BMW,HS} in that direction. In particular in the recent work \cite{HS}, the second and third authors show that a stationary hypercurrent with density everywhere $<3$ is a classically embedded minimal surface outside of a set of $\mathcal{H}^m$-measure zero. In the proof, a crucial step is to show that around every point where the varifold tangent and the current tangent are a double copy of a plane, in regions without density $2$ points, the current is the locally the union of two minimal graphs (without self-intersections). One might wonder whether just requiring that the varifold tangent is a double copy of a plane, the same conclusion would hold: Theorem \ref{t:uno} provides a negative answer. The work \cite{BMW} proves that, if the topology of the multiplicity $1$ part of a stationary varifold $V$ around a multiplicity $2$ point $p$ is trivial in an appropriate sense, then the stationary varifold is a two-valued Lipschitz graph with ``small'' intersection set: again Theorem \ref{t:uno} hints at the difficulty of verifying this topological assumption.

\subsection{Strategy of the proof} The overall idea of the argument for Theorem \ref{t:uno} is that it is possible to insert (through an appropriate ``surgery'') an infinite sequence of catenoidal necks connecting two {\em Euclidan} minimal graphs which are tangent to each other at the origin and then perturb suitably the metric to make the example a stationary varifold. Even though the ``ends'' of the catenoids become flatter at a faster rate as the dimension increases, we are at the moment unable to improve the regularity of the metric. The idea of the second theorem is similar, but rather than inserting catenoidal necks we insert ``floating disks'', see Section \ref{sec.the models} for the precise description. The floating disks are in fact the union of two catenoids and a disk meeting in a common round sphere at equal angles, and their regular part is stable. Since the object has the same ends of the classical catenoid, the overall qualitative behavior of this model is indeed the same and the argument for Theorem \ref{t:due} is therefore rather similar. In order to prove the stability of the regular part of $\Sigma$ in Theorem \ref{t:due}, our construction will not only produce a metric with respect to which $\Sigma$ is stationary, but it will also produce a calibration form in a horned tubular neighborhood of the regular part of $\Sigma$.

 \subsection{Notation} Throughout the paper we will use classical notion in geometric measure theory, see e.g. \cite{Simon}. Moreover:
\begin{itemize}
\item $\mathbf{B}_r (p)$ will denote the ball of radius $r$ and center $p$ in $\R^{m+1}$;
\item $B_r (p)$ will denote the $m$-dimensional disk $\mathbf{B}_r (p) \cap \{x_{m+1}=0\}$, where it is implicitly assumed that $p\in \{x_{m+1}=0\}$;
\item $\mathbf{C}_r (p)$ will denote the cylinder with base $B_r (p)$ and vertical axis $\{(0, ..., 0, \lambda) : \lambda \in \mathbb R\}$; the point $p$ will be omitted if it is the origin;
\item $\mathbf{G}_u$ will denote the current induced by the graph of a Lipschitz function $u$ (in fact the function will typically be regular).
\end{itemize}

\subsection{Acknowledgments} ``The research of the first author has been supported by the Simons Foundation through the Simons Initiative on the Geometry of Flows (Grant Award ID BD-Targeted-00017375, UB). The third author is grateful for the support of the NSF Career Grant DMS-2044954. We are grateful to G. Orriols for pointing out a mistake in the higher regularity of the metric in the original version of this manuscript.

\section{the models}\label{sec.the models}
In this section we describe, the three models: the ``base model'' consisting of the union of two suitable minimal graphs, and the two ``local models'' for the surgeries. In all three cases we will first present the two-dimensional models and then the higher-dimensional versions.

\subsection{The base in $m=2$}\label{subsec.base m=2}
Let $N\in \N$ large be fixed and consider the ``Weierstrass'' data 
\[  
\mathbb C \ni z\; \mapsto\;  \phi(z)= \left(Nz^{N-1}, \frac{1}{2}( 1- h(z)), \frac{i}{2}(1+h(z))\right)\,.
\]
where the holomorphic function $h$ is chosen such that $\phi^2=0$, i.e. $2h(z)= N^2 z^{2N-2}$. The associated minimal surface $\Sigma_N$ is given by the image of 
\begin{align*}
X(z) &= \Re\left(\int^z \phi(z)\, dz \right)=\Re\left( \left(z^ N, \frac12\left(z-\frac{N^2}{4N-2}z^{2N-1}\right), \frac{i}{2}\left(z+\frac{N^2}{4N-2}z^{2N-1}\right) \right) \right)\\
&=\left(r^N\cos(N\theta), z+\psi(z)\right)
\end{align*}
for some smooth harmonic function $\psi(z)$ satisfying $|z||\nabla \psi|+|\psi|\lesssim |z|^{2N-1}$\,. 
This implies that $\a{\Sigma_N\cap \bC_{1}}= \bG_u$ for some (real analytic) function $u$ satisfying 
\begin{equation}\label{eq.base gradient}
    |u(z)- \Re\left(z^N\right)|+ r|\partial_z u - z^{N-1}|\lesssim |z|^{2N}\,.
\end{equation}
Consider the reflection $\boldsymbol{s}$ through the hyperplane $\{y_{m+1}=0\}$, i.e. $\boldsymbol{s}(y)=y- 2y_{m+1}e_{m+1}$. %$\boldsymbo$ $\boldsymbol{s}\colon \R^3 \to \R^3$ given by $\rho^3(x_1,x_2,x_3):=(x_1,x_2,-x_3)$.
Our base model will then be the current
\[
\Gamma_N:=\a{\bG_u} +\boldsymbol{s}_\sharp \a{\bG_u}\,.
\]
in the case of Theorem 1 and 
\[
\a{\bG_u} -\boldsymbol{s}_\sharp \a{\bG_u}
\]
in the case of Theorem \ref{t:due}.
  
Notice that the support of $\Gamma_N$ is the union of the graphs of $u$ and $-u$, in particular it is an immersed surface. Moreover $\Gamma_N$ has a flat singular point at $0$, where its varifold tangent cone is $2\a{\pi_0}$, and has classical singularities along $2N$ half-lines, that is on the set where $u=0$. Note moreover that the map $\mathbf{s}$ reverses orientation, in particular at the point $0$ the two currents $\a{\bG_u}$ and $\boldsymbol{s}_\sharp \a{\mathbf{G}_u}$ have tangent planes with same support and opposite orientation: the ``current'' tangent to $\Gamma_N$ is therefore the trivial current $0$.

\subsection{The base in $m>2$}\label{subsec.base m>2}
In the case $m>2$ we have two possibilities.  One is to just take the two dimensional model and cross it with some $\R^{m-2}$, more precisely we  could consider the $m$-dimensional current $T=\Gamma_N \times \a{\pi}$ for the oriented plane $\pi= \{x: x_1=x_2= x_{m+1} =0\}$. We note that we still have that $T$ is the immersion of two graphs, i.e. $T=\a{\bG_u} +\boldsymbol{s}_\sharp \a{\bG_u}$, where we have extended the function $u$ constantly along $\R^{m-2}$. The difference to the two dimensional case is that the intersection of the two graphs is not transversal along $\{0\}\times \R^{m-2} \subset \R^{m+1}$.

To obtain a model $\a{\bG_u}+\boldsymbol{s}_\sharp\a{\bG_u}$ with only transversal intersections outside the origin, we would like to find a solution $u$ to the minimal surface equation that is modeled on a $N$-homogeneous harmonic polynomial $h_N$, with $\nabla h_N(y) \neq 0 $ for all $y \neq 0$. To find such a polynomial in three dimensions we can make use of the spherical harmonics and take for instance $h_N= |x|^{l(l+1)}Y^0_l(\frac{x}{|x|})$. Since $Y^0_l(\theta, \varphi)$ is $\varphi$ invariant, i.e. $\partial_\varphi Y^0_l=0$, we deduce that the nodal domains of $Y^0_l$ are $\varphi$-invariant and so the connected components of $\{Y^0_l>0\}$ and $\{Y^0_l<0\}$ alternate. Hence the Hopf-maximum principle implies that $\nabla Y^0_l\neq 0$ on the level sets $\{Y^0_l=0\}$. Hence $\a{\bG_{h_N}} + \boldsymbol{s}_\sharp \a{\bG_{h_N}}$ has only transversal intersections.

It remains to show that for any $N$-homogeneous harmonic polynomial there is a solution $u$ to the minimal surface equations such that $\lim_{r\downarrow 0} \frac{u(ry)}{r^N}=h_N(y)$. This can be achieved either following the arguments of \cite[Theorem 1.1, Corollary 1.2]{CaffarelliHardtSimon1984} or as follows. Let $A(\nabla u)\colon D^2u= \left(I - \frac{\nabla u \otimes \nabla u}{1+|\nabla u|^2}\right) \colon D^2u$ be the minimal surface operator. Since $p \mapsto A(p)$ is analytic we can appeal to the Cauchy–Kovalevskaya theorem to find a solution $u$ to $A(\nabla u)\colon D^2u=0$ in a neighborhood of $\{x_{m+1}=0\}\cap B_1$ with $u=h_N, \partial_{m+1}u=\partial_{m+1}h_N$ on $\{y_{m+1}=0\}\cap B_1$. We claim that $\lim_{r\downarrow 0} \frac{u(ry)}{r^N}=h_N(y)$. Since $u$ is analytic with $\nabla u(0)=0$ there is $k>1$ such that $\lim_{r\downarrow 0} \frac{u(ry)}{r^L} = k(y)$ for some non-trivial homogeneous harmonic function $k$. We must have $L\le N$ since on $\{y_{m+1}=0\}$ we have $\frac{u(ry)}{r^L}=\frac{h_N(ry)}{r^L}$ and the right hand side would blow up as $r\downarrow 0$. If $L <N$ then we would deduce from  $\frac{u(ry)}{r^L}=\frac{h_N(ry)}{r^L}$, $\partial_{m+1}\frac{u(ry)}{r^L}=\partial_{m+1}\frac{h_N(ry)}{r^L}$ on $\{y_{m+1}=0\}$, that $k=0$ and $\partial_{m+1} k=0$ on $\{y_{m+1}=0\}$. This implies that $k \equiv 0$, contradicting the non-triviality of $k$. We thus conclude that $L=N$, and so we obtain on $\{y_{m+1}=0\}$
\[ k(y) = \lim_{r\downarrow 0} \frac{u(ry)}{r^L}=h_N(y),\quad \partial_{m+1}k(y) = \lim_{r\downarrow 0} \partial_{m+1}\frac{u(ry)}{r^L}=\partial_{m+1}h_N(y)\,.\]
Since $h_N$ and $k$ are harmonic this implies $h_N=k$.

\subsection{The catenoid in $m=2$}\label{subsec.catenoid m=2}
The upper half of the catenoid is given by the graph of $c(z)=\cosh^{-1}(|z|)$ over the annulus $B_1^c\subset \R^2$. We will use the notation $\operatorname{Cat}$ for the current induced by it, assuming that the orientation is fixed so that the oriented tangents of the upper part of the catenoid converge to the positive orientation of the plane $\{x_{n+1}=0\}$ at infinity. We denote the scaled version by $c_\rho(z)= \rho\, c(\nicefrac{z}{\rho})$ %$\rho \cosh^{-1}(\frac{|z|}{\rho})$
 having a neck of diameter $\rho$. Given a hight $H<H_R$ there are precisely two catenoids with height $H$ at radius $R$: one stable and one unstable. They are characterized by the two solutions to 
\[\cosh^{-1}\left(\nicefrac{R}{\rho}\right)=\nicefrac{H}{\rho}\]
They can be estimated as follows: we substitute $\frac{H}{\rho}=x$ and set $f(x)=\frac{\cosh(x)}{x}$, so that the above equation reads $f(x)=\nicefrac{R}{H}$. Hence, estimating the solutions for $\rho$ reduces to estimating the ``inverse'' to $f$. 
Firstly, we note that $f'(x)<0$ for $x<x_{\min}$ and $f'(x)>0$ for $x>x_{\min} \approx 1.2$, characterized by $x_{\min}\sinh(x_{\min})=\cosh(x_{\min})$. Thus, $f$ has a unique minimum in $f(x_{\min})=\sinh(x_{\min})$ and so $f(x)=y$ has precisely two solutions for $y>\sinh(x_{\min})$. Now we may define 
\[ g(y)=\ln(2y)+\ln(\ln(2y))\le \ln(2y)+\ln(2\ln(2y))=h(y)\,.\]
Observe that for sufficient large $y$ 
\begin{align*}
    \frac{f(g(y))}{y}-1&= \frac{2y\ln(2y)+(2y\ln(2y))^{-1}}{2yg(y)}-1=-\frac{2y\ln(\ln(y)) - (2y\ln(2y))^{-1}}{2yg(y)}<0\\%=- O\left(\frac{1}{\ln(g(y))}\right)<0\\
    \frac{f(h(y))}{y}-1&=\frac{4y\ln(2y)+(4y\ln(2y))^{-1}}{2yh(y)}-1=\frac{2y\ln(2y)+(4y\ln(2y))^{-1}-\ln(2\ln(2y))}{2yh(y)}>0\,.
\end{align*}
Hence $f(g(y))\le y\le f(h(y))$. Since $f'(x)>0$ for $x>x_{\min}$, we then get that 
\[\ln(2y) \le g(y) \le f^{-1}(y) \le h(y)\,.\]
We can use the above to obtain an upper (and lower) bound for $\rho$: we have 
\begin{equation}\label{eq.bound for rho}
    \rho= \frac{H}{x}= \frac{H}{f^{-1}(\nicefrac{R}{H})}\le \frac{H}{g(\nicefrac RH)}\le \frac{H}{\ln(2\nicefrac RH)}\le H\,,
\end{equation}
where the last estimate holds for $R\ge \nicefrac12$. For further reference let us observe that, since $\nabla c_\rho(z)= \rho\,(|z|^2-\rho^2)^{-\nicefrac12} \frac{z}{|z|}$, we have
\begin{equation}\label{eq.derivative catenoid}
    |\nabla c_\rho(z)|\lesssim \rho \lesssim H\qquad\text{ for} |z|\ge 1\,.
\end{equation}

In summary we have that ${\eta_{0,\rho^{-1}}}_\sharp \operatorname{Cat}= \a{\bG_{c_{\rho}}\res B_{\rho}^c}+ \boldsymbol{s}^\sharp \a{\bG_{c_{\rho}}\res B_{\rho}^c}$ in the sense of currents and has height $H$ on $\partial \bC_R$.

\subsection{The ``floating disk'' in $m=2$}\label{subsec.floating disk m=2}
The ``floating disk'' consists of two pieces of the catenoid and a flat disk closing the ``neck''. The pieces of the catenoid are chosen so that they meet with an angle of $\nicefrac{2\pi}{3}$ at a common horizontal circle. 
More precisely, let $r_0$ be be the unique solution to $(\cosh^{-1})'(r_0)=\tan(\nicefrac{\pi}{3})$, i.e. $r_0=\sqrt{1+\nicefrac{1}{\tan^2(\nicefrac{\pi}{3})}}=\nicefrac{2}{\sqrt{3}}$. Finally, let $\tilde{c}(z)=\cosh^{-1}(|z|)-\cosh^{-1}(r_0)$ and associated scaled version $\tilde{c}_{\tilde{\rho}}(z) = \tilde{\rho}\, \tilde{c}(\nicefrac{z}{\tilde{\rho}})$. The ``floating disk'' is now given by the surface $\Sigma$ which induces the integral current 
\[  \a{\Sigma} = \a{B_{r_0}\times \{0\}}+ \a{\bG_{\tilde{c}}\res B_{r_0}^c} - \boldsymbol{s}_\sharp\a{\bG_{\tilde{c}}\res B_{r_0}^c}\, .\]
 In particular $\Sigma$ is the union of three minimal surfaces meeting at a common circle and note that $\a{\Sigma}$ induces a cycle mod $3$. As a consequence of \cite[Proposition 1.8]{DHMSS} the floating disk $\a{\Sigma}$ is the unique minimizer in $\bC_r$ for some $r>r_0$ as a current mod $3$. 

The corresponding rescaled version is
\[  \a{\Sigma_{\tilde{\rho}}} = (\eta_{{\tilde{\rho}}^{-1}})_\sharp \a{\Sigma}\, .\]
As for the catenoid we can solve for $\tilde{\rho}$ in terms of $R$ and $H$ again picking the ``small'' solution. Since $\tilde{c}$ deviates from $c$ only by a constant we deduce that the upper bound \eqref{eq.bound for rho} holds as well for $\tilde{\rho}$ which in particular implies as well that 
\begin{equation}\label{eq.derivative floating disc}
    |\nabla \tilde{c}_{\tilde{\rho}}(z)|\lesssim \tilde{\rho} \lesssim H\qquad\text{ for} |z|\ge 1\,.
\end{equation}

\subsection{The catenoid in $m> 2$}\label{subsec.catenoid m>2}
As in the two-dimensional case, we are looking for a surface with two ends. One approach is to search for a minimal surface that is rotationally symmetric around the $\boldsymbol{e}_{m+1}$-axis. This translates into the search for a critical point of 
\[\mathcal{A}(s)=\int_{a}^b G(s,\dot{s})\, dt \quad\text{ with } G(s,\dot{s})=|s|^{m-1}\sqrt{1+|\dot{s}|^2}\,.\]
Any stationary point of $\mathcal{A}$ must have a constant Hamiltonian $H$, i.e. for some $c>0$
\[H(s)=\frac{\partial{G}}{\partial \dot{s}}\dot{s}-G= -\frac{|s|^{m-1}}{\sqrt{1+|\dot{s}|^2}}=-c^{m-1}\,.\]
Passing to $\tilde{s}(t)=c \,s(\frac{t}{c})$ we may assume $c=1$.
Hence we reduced the second order ODE to the following first order one:
\[ \frac{\dot{s}}{\sqrt{s^{2(m-1)}-1}}=1\,.\]
Since for $0\le s-1$ small we have $s^{2(m-1)}-1 = 2(m-1)(s-1) + O((s-1)^2)$ or $\frac{1}{\sqrt{s^{2(m-1)}-1}}\le \frac{2}{\sqrt{2(m-1)(s-1)}}$ and for large $s$ we have $\frac{1}{\sqrt{s^{2(m-1)}-1}}\le\frac{1}{2s^{m-1}}$ we deduce that $\frac{1}{\sqrt{s^{2(m-1)}-1}}$ is integrable on $(1,\infty)$. Thus we can define 
\[c(r)=\int_{1}^r \frac{1}{\sqrt{s^{2(m-1)}-1}}\, ds \quad\text{ with }\quad c_\infty = \lim_{r\to \infty} c(r)<\infty\,.\]
As in the the two dimensional case, given $H$ small we are interested in finding $\rho\le R$ such that 
\begin{equation}\label{eq.finding rho}
    h(\rho):=\rho \,c\left(\frac{R}{\rho}\right)=H\,.
\end{equation}
Since 
\[h'(\rho) = \int_{1}^{\frac{R}{\rho}} \frac{1}{\sqrt{s^{2(m-1)}-1}}\, ds - \frac{R}{\rho\sqrt{(\nicefrac{R}{\rho})^{2(m-1)}-1}}\]
has precisely one zero $\rho_0$ hence $h(\rho)$ has a unique maximum $H_\infty$. Furthermore the equation \eqref{eq.finding rho} has two solutions for $H<H_\infty$. As in the two dimensional situation we take the smaller one. 
Let $R_\infty>0$ be fixed such that $\frac{c_\infty}{2} \le c(R_\infty) \le c_\infty$ and $\rho_1<R$ such that $\frac{R}{\rho_1}=R_\infty$. Now given $H$ small we observe that
\begin{align*}
    h(\rho) &\le h\left(\frac{H}{c_\infty}\right) \le H \quad&\text{ for }& \quad \rho \le \frac{H}{c_\infty}\\
    h(\rho) &\ge \frac{2H}{c_\infty} \frac{c_\infty}{2} \ge H \quad&\text{ for }& \quad \frac{2H}{c_\infty} \le \rho \le \frac{R}{R_\infty}\,.
\end{align*}
Hence we deduce that the smaller solution to \eqref{eq.finding rho} satisfies $\rho \approx H$. This implies that our rescaled catenoid $c_\rho(r)=\rho \, c(\nicefrac{r}{\rho)})$ satisfies for $H \ll R$ 
\[\left(\frac{\rho}{R}\right)^{m-1} \le \dot{c}_\rho(R)=\frac{\rho^{m-1}}{\sqrt{R^{2(m-1)}-\rho^{2(m-1)}}} \le 2  \left(\frac{\rho}{R}\right)^{m-1}\,.\]
In summary we found
\begin{equation}\label{eq.estimates for catenoid in m>2}
    \rho \approx H \;\text{ and }\; |\nabla c_\rho(R)| \lesssim H^{m-1}\;.
\end{equation}

\subsection{The ``floating disk'' in $m>2$}\label{subsec.floating disk m>2}
The construction follows the same lines as in the two dimensional case. For us the only important facts are 
\begin{equation}\label{eq.estimates for floating disk m>2}
    \tilde{\rho} \approx H \;\text{ and }\; |\nabla \tilde{c}_\rho(R)| \lesssim H^{m-1}\;.
\end{equation}

\begin{remark}\label{r:loss-in-2d}
We want to emphasize that the bound in \eqref{eq.derivative catenoid} is linear in $H$ while in the higher-dimensional case the corresponding bound in \eqref{eq.estimates for catenoid in m>2} is superlinear. However the reason for the low regularity of the metric $g$ is in fact the estimate on the derivative of the base model, which is not flat enough.
\end{remark}

\section{The calibration form and the metric in the gluing region}\label{sec.the calibration form}

 The aim of this section is to give a way to transition smoothly, in an annulus, from one minimal graph (e.g. the one given the base model) to another (e.g. one given by a local model) and hence perturb the metric so that they remain stationary and stable. The transition from one graph to the other will be achieved with a simple partition of unity. In order to change the metric so to keep stationarity and stability we will construct a suitable ``calibration''-form $\omega$ in a tubular neighborhood of the obtained graph $\bG_v$. The form will then need to satisfy $d\omega=0$ and $\omega|_{\bG_v}= d\operatorname{vol}_{v}$.
Furthermore, we want to estimate the derivatives of $\omega$. The construction of the calibration is inspired by \cite{DDH}. 

Let us introduce the following notation for an annulus $\Omega_{r,s}=B_{s}\setminus B_r \subset  \R^m$. Moreover we will often use the gluing function $\theta \colon \R\to [0,1]$, which is a smooth function identically equal to $0$ for $t<1/3$ and to $1$ for $t>1-1/3$, and its translated version $\theta_k(t):=\theta(t-k)$.

We will work under the following assumptions:

\begin{assumption}\label{ass:glue}
Let $u_i\colon \Omega_{7,12}\to \R$ be two minimal graphs that satisfy
\begin{enumerate}[label=(A.\arabic*)]
    \item\label{assumption.graph1} there is $x_0 \in \Omega_{7,12}$ with $u_1(x_0)=u_2(x_0)=0$
    \item\label{assumption.graph2} $\norm{\nabla u_1}_{L^\infty(\Omega_{7,12})}+\norm{\nabla u_2}_{L^\infty(\Omega_{7,12})} = \boldsymbol{\delta}<\boldsymbol{\delta}_0$.
\end{enumerate}
\end{assumption}

Furthermore, we will denote with $N_{u_i}= \frac{1}{\sqrt{1+|\nabla u_i|^2}} (-\nabla u_i, 1)$ the normals to the graphs of $u_i$ implicitly considering each normal as a function on $\Omega_{7,12}\times \R$ by $N_{u_i}(z,x_3)=N_{u_i}$. With their help we obtain calibration forms in the Euclidean metric for $\bG_{u_i}$ in $\Omega_{7,12}\times \R$: 
\begin{equation}\label{eq.callibration forms for the graphs}
    \omega_{u_i}= \sum_{\alpha} N^\alpha_{u_i}\, \star dx^\alpha = \det(N_{u_i}, \cdot, \cdot) \,.
\end{equation}

Next we glue $u_1$ to $u_2$ to define a smooth function $v\colon \Omega_{7,12}\to \R$ in the following way
\begin{equation}\label{eq.gluefunct}
v(x)=(1-\theta_9(|x|))\, u_1(x)+\theta_9(|x|)\, u_2(x)\,.
\end{equation}
It follows directly from Assumptions \ref{assumption.graph1} that
\begin{equation}\label{eq.graph distant}
    \norm{u_1-u_2}_{L^\infty(\Omega_{7,12})} \lesssim \norm{\nabla (u_1-u_2)}_{L^\infty(\Omega_{7,12})} \lesssim \boldsymbol{\delta}.
\end{equation}
Moreover, standard elliptic estimates imply that  
\begin{equation}\label{eq.higher derivatives}
    \norm{D^k u_i}_{L^\infty(\Omega_{8,11})} \lesssim_k \norm{\nabla u_i}_{L^\infty(\Omega_{7,12})} \lesssim \boldsymbol{\delta} \qquad \text{for any $k\ge 1$}.
\end{equation}
Notice that, by \eqref{eq.graph distant} and \eqref{eq.higher derivatives}, we also have 
\begin{equation}\label{eq:vestimate1}
    %\|\nabla v\|_{L^\infty(\Omega_{8,11})}\lesssim \boldsymbol{\delta}\\
    \|D^k v\|_{L^\infty(\Omega_{8,11})}\lesssim_k \boldsymbol{\delta}\qquad \text{for any $k\ge 1$}\,.
\end{equation}

\subsection{The calibration form}

The goal of this section is to prove the following 

\begin{proposition}[Local gluing]\label{prop:localglue}
    Let $u_1,u_2$ be as in Assumptions \ref{ass:glue} and let $v$ be as in \eqref{eq.gluefunct}. Then there exists a \emph{closed} form $\omega$ defined in a neighborhood of the graph of $v$ such that
    \begin{enumerate}[label=(F.\arabic*)]
    \item\label{prop:localglue-F1} $\omega|_{\bG_v}=d\operatorname{vol}_v$, that is $\omega$ is identically one on the tangent to $\bG_v$;
    \item\label{prop:localglue-F2} $\omega=\omega_{u_1}$ on $\Omega_{7,8}\times [-10,10]$ and $\omega=\omega_{u_2}$ on $\Omega_{11,12}\times [-10,10]$;
    \item\label{prop:localglue-F3} $\|\omega\|-1 = \dist(y,\bG_v)\,R$ with $\norm{D^k R}_{L^\infty(\Omega_{7,12}\times[-10,10)} \lesssim_k \,\boldsymbol{\delta}$ for any $k$.
    \end{enumerate}
\end{proposition}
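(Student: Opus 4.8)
\textbf{Proof proposal for Proposition \ref{prop:localglue}.}

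The plan is to build $\omega$ by interpolating between the three pieces of information we already have: the exact calibrations $\omega_{u_1}, \omega_{u_2}$ coming from the minimal graphs at the two ends of the annulus, and the graph $\bG_v$ itself in the middle. The natural first guess is to take the ``Euclidean'' $(m)$-form that is dual to the unit normal $N_v$ of $\bG_v$, extended to be constant in the vertical direction, i.e. $\omega_v = \sum_\alpha N_v^\alpha\,\star dx^\alpha = \det(N_v, \cdot, \dots, \cdot)$. By construction this satisfies \ref{prop:localglue-F1} (it is the volume form of $\bG_v$ restricted to its tangent) and, since $N_v$ agrees with $N_{u_1}$ on $\Omega_{7,8}$ and with $N_{u_2}$ on $\Omega_{11,12}$, it also satisfies \ref{prop:localglue-F2}. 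The estimate in \ref{prop:localglue-F3} is a consequence of \eqref{eq:vestimate1}: on $\Omega_{8,11}$ all derivatives of $v$ (hence of $N_v$) are $\lesssim \boldsymbol{\delta}$, so $\omega_v$ differs from the flat form $\star dx^{m+1}$ by a term whose derivatives are controlled by $\boldsymbol{\delta}$, and its comass is $1$ exactly on $\bG_v$; away from $\bG_v$ the defect is of the form $\dist(y,\bG_v)\cdot R$ simply because $\omega_v$ is vertically constant while $\bG_v$ is not, and a Taylor expansion in the vertical variable produces the factor $\dist(y,\bG_v)$ with remainder $R$ having $\boldsymbol{\delta}$-controlled derivatives. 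The one property that $\omega_v$ fails is closedness: $d\omega_v = (\Div_{\!\text{something}} N_v)\,dx^1\wedge\cdots\wedge dx^{m+1}$, which vanishes only when $v$ solves the minimal surface equation, and $v$ does not (it is a cutoff interpolation). However $\bG_{u_1},\bG_{u_2}$ are minimal, so $d\omega_v$ is supported in the gluing annulus $\{|x|\in(\tfrac{9}{3}+\dots)\}$—more precisely in $\Omega_{9+1/3,\,10-1/3}\times\R$ where $\theta_9$ is non-constant—and by \eqref{eq:vestimate1} we have $\|D^k(d\omega_v)\|_\infty \lesssim_k \boldsymbol{\delta}$ with compact support in the interior of $\Omega_{8,11}\times[-10,10]$.

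Next I would correct $\omega_v$ to a closed form by solving $d\eta = d\omega_v$ with $\eta$ a compactly supported $(m-1)$-form in the gluing region, and set $\omega := \omega_v - \eta$. Concretely, because $d\omega_v$ is an exact top-type form $(m+1)$-form?—no, $d\omega_v$ is an $(m+1)$-form on an $(m+1)$-dimensional ambient? wait, $\omega_v$ is an $m$-form on $\R^{m+1}$, so $d\omega_v$ is an $(m+1)$-form, i.e. a top form, with integral zero over any region containing its support (since $\omega_v = \omega_{u_1}$ near the inner boundary and $= \omega_{u_2}$ near the outer boundary, Stokes gives $\int d\omega_v = 0$ on $\Omega_{8,11}\times[-10,10]$ because the fluxes through inner and outer boundaries both vanish by minimality of $u_1,u_2$, and there is no vertical flux since the forms are vertically constant and the slab is thick enough). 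Hence $d\omega_v$ admits a compactly supported primitive $\eta$ by the Poincaré lemma for compactly supported forms, with $\|D^k\eta\|_\infty \lesssim_k \|D^{k}(d\omega_v)\|_{\infty} \cdot (\text{geometry}) \lesssim_k \boldsymbol{\delta}$ (the explicit primitive via the standard contracting homotopy, or via convolution with a fundamental solution, gives this bound with constants depending only on the fixed annulus and slab). Then $\omega = \omega_v - \eta$ is closed, still equals $\omega_{u_i}$ near the two ends (where $\eta \equiv 0$), so \ref{prop:localglue-F2} survives; \ref{prop:localglue-F1} survives provided we arrange $\eta|_{\bG_v} = 0$ on the tangent—this needs a small argument, e.g. choosing the primitive $\eta$ to involve only $dx^1,\dots,dx^m$ (no $dx^{m+1}$), which is possible if $d\omega_v$ is first written as $d$ of such a form; alternatively one subtracts off the tangential part of $\eta$ along $\bG_v$, which costs a further $\dist(y,\bG_v)\,R$ term absorbed into \ref{prop:localglue-F3}. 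Finally \ref{prop:localglue-F3} follows since both $\|\omega_v\|-1$ and $\|\eta\|$ are $\lesssim \dist(y,\bG_v)R + \boldsymbol{\delta}$; one checks the comass expansion $\|\omega\| - 1 = \langle \omega, \vec{G_v}\rangle - 1 + O(|\omega|^2 \cdot \text{angle}^2)$ and that the leading term is precisely $-\langle \eta, \vec{G_v}\rangle + (\|\omega_v\|-1)$, both of which vanish on $\bG_v$ and have the claimed form off it.

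The main obstacle, I expect, is the bookkeeping needed to guarantee \ref{prop:localglue-F1} exactly (comass identically one on the tangent, not just approximately) while simultaneously keeping \ref{prop:localglue-F2} (exact agreement with $\omega_{u_i}$ at the ends) and \ref{prop:localglue-F3} with the sharp ``$\dist(y,\bG_v)\cdot R$'' structure rather than a cruder $\boldsymbol{\delta}$-bound. The cleanest route is probably to perform the whole construction in the graph coordinates of $\bG_v$: pull everything back by the diffeomorphism $(x,t)\mapsto (x, t+v(x))$ so that $\bG_v$ becomes the flat slice $\{t=0\}$, build there the closed form that restricts to $dx^1\wedge\cdots\wedge dx^m$ on $\{t=0\}$ and interpolates correctly, using a primitive that is a multiple of $dt$ contracted appropriately so it automatically kills no tangential directions, then push forward; the distance factor $\dist(y,\bG_v) \approx |t|$ then appears transparently. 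Getting the constants in the primitive estimate to depend only on the (fixed) geometry of $\Omega_{7,12}\times[-10,10]$ and not on $\boldsymbol{\delta}$ is the remaining technical point, handled by noting that the contracting homotopy operator on this fixed domain is a fixed bounded operator from $C^k$ to $C^k$.
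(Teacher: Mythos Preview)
Your approach has a genuine gap precisely where you flag it: guaranteeing that the correction $\eta$ vanishes tangentially on $\bG_v$. A generic compactly supported primitive of $d\omega_v$ has no reason to vanish there, and ``subtracting off the tangential part of $\eta$ along $\bG_v$'' destroys closedness. (There is also a minor issue: $d\omega_v$ is vertically constant, so its support is not compact in $\R^{m+1}$ and the compactly-supported Poincar\'e lemma does not apply as stated.) The instinct in your last paragraph to pass to graph/normal coordinates is right, but you are still missing the idea that makes everything click.

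The paper's key observation is that there are \emph{two} natural extensions of the graph volume form, not one. Besides your $\omega_v$ (dual to the unit normal $N_v$, closed only where $v$ is minimal), there is the form $d\operatorname{vol}_v$ obtained by extending the intrinsic volume form of $\bG_v$ to be independent of the normal coordinate $\tau$ in a tubular neighborhood: in Fermi coordinates, $\phi^\sharp d\operatorname{vol}_v = \sqrt{|g|}\,dx$. This second form is closed \emph{everywhere}, including over $\Omega_{9,10}$ where $v$ is not minimal, and trivially restricts to the volume form on $\{\tau=0\}$. So in the middle region one simply sets $\omega = d\operatorname{vol}_v$; no correction is needed there at all.

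The interpolation is then done only on the transition annuli $\Omega_{8,9}$ and $\Omega_{10,11}$, where $v=u_i$ \emph{is} minimal. There both $\omega_{u_i}$ and $d\operatorname{vol}_v$ are closed, so $\sigma := \phi^\sharp(\omega_{u_i}-d\operatorname{vol}_v)$ is closed, and moreover $\sigma$ vanishes on $\{\tau=0\}$ since both forms restrict to the graph volume. One now applies the Poincar\'e homotopy with the vector field $Y=\partial_\tau$ (flow $(x,\tau)\mapsto (x,e^t\tau)$, contracting onto $\{\tau=0\}$), which produces a primitive $I^Y\sigma$ that automatically carries a factor of $\tau$. Setting
\[
\phi^\sharp\omega \;=\; \theta\,\phi^\sharp\omega_{u_i} \;+\; (1-\theta)\,\phi^\sharp d\operatorname{vol}_v \;+\; d\theta\wedge I^Y\sigma
\]
then gives a closed form that restricts exactly to $d\operatorname{vol}_v$ on $\bG_v$. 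The built-in factor of $\tau$ in $I^Y\sigma$ (and in $\sigma$ itself) is also what produces the precise $\dist(y,\bG_v)\cdot R$ structure in (F.3), rather than the cruder $O(\boldsymbol{\delta})$ bound your correction would yield.
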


This form will eventually be ``made'' into a calibration form by choosing an appropriate perturbation of the Euclidean metric. We nonetheless abuse the term somewhat by calling it a calibration already at this stage. The proof is split in four steps.

\subsubsection*{Step 1: normal coordinates}
Let $\phi=\varphi + \tau N$ be a parametrization of the normal neighborhood of the graph of a smooth function $v$ where $\varphi(x)= x + v(x) \boldsymbol{e}_{m+1}$, and denote with $g_{ij}:=\partial_i\varphi\,\partial_j\varphi$ and with $|g|$ the determinant of this matrix. Moreover let $d\operatorname{Vol}_v$ be the volume form of $\bG_v$ and $\omega_v:=\det(N_v,\cdot,\cdot)$, as above.  Since $\partial_i \phi= \partial_i\varphi - \tau h_i^k \partial_k \varphi= b^k_i \partial_k\varphi$, where $h_i^k=h_i^k(\nabla v, D^2v)$ is the second fundamental form of $\bG_v$, and $\partial_\tau\phi = N$, the pullback metric is given by
\[ \phi^\sharp \delta_{\R^{m+1}}= \left(g_{ij} - \tau \left(g_{ik}h^k_j + g_{jk}h^k_i\right) + \tau^2 h_i^kg_{kl}h^l_j\right) \;dx^i\otimes dx^j + d\tau\otimes d\tau = k_{ij}\;dx^i\otimes dx^j + d\tau\otimes d\tau\]
Furthermore we can expand the above defined differential forms in these coordinates given by 
% \begin{equation}\label{eq.forms in normal coordinates}
%     \phi^\sharp d\operatorname{vol}_v= \sqrt{|g|} \,dx \,\text{ and }\, \phi^\sharp \omega_v = (1 + \tau h^k_k + \tau^2 |h|^2) \sqrt{|g|} \, dx + \sigma \text{ with } |\sigma|\lesssim |\tau| |\nabla v||D^2v| + \tau^2 |D^3v|\,.
% \end{equation}
\begin{align}
    \phi^\sharp d\operatorname{vol}_v&= \sqrt{|g|} \,dx &\text{ with }& \| D^{k+1} \sqrt{|g|}\|_{L^\infty(\Omega_{8,11})} \lesssim _k \boldsymbol{\delta} \label{eq.forms in normal coordinates1}\\
    \phi^\sharp \omega_v &= (1 + \tau h^k_k ) \sqrt{|g|} \, dx + \tau^2 \sigma_1 + \tau \sigma_2 &\text{ with }& \norm{ D^k\sigma_1}_{L^\infty(\Omega_{8,11}\times [-10,10])}\lesssim \boldsymbol{\delta}\nonumber\\
&& \text{and}&  \norm{ D^k\sigma_2}_{L^\infty(\Omega_{8,11}\times [-10,10])}\lesssim \boldsymbol{\delta}^2.\label{eq.forms in normal coordinates2}
\end{align}
This can be seen as follows. Let $\boldsymbol{p}$ be the projection onto the support of the current $\bG_v$ (namely the set-theoretic graph of $v$). Then we have 
\begin{align*}
    \phi^\sharp \omega_v( \partial_1, \dotsc ,\partial_m)&= \det(N_v(\boldsymbol{p}(\phi)), b_1^{k_1} \partial\varphi_{k_1}, \dots, b_m^{k_m} \partial\varphi_{k_m})= \det(b)\left(\sqrt{|g|} + \tau^2 \sigma_1\right) \\
    &=(1+\tau h^k_k + \tau^2 \tilde{b}) \left(\sqrt{|g|} + \tau^2\, \sigma_1\right) \quad\text{ with } |\sigma_1|\lesssim |D^3v|, |\tilde{b}|\lesssim |D^2v|^2\\
    \phi^\sharp \omega_v(\partial_\tau, \partial_{j_1}, \dotsc, \partial_{j_{m-1}})&= \tau \sigma_{j_1,\dotsc, j_{m_1}} \quad \text{ with } |\sigma_{j_1,\dotsc, j_{m_1}}|\lesssim |Dv||D^2v|\,,
\end{align*}
where we have used that $N(\boldsymbol{p}(\phi)) = N(x) + \tau \mu(\tau,x)$ with $N(x)\cdot \mu(0,x)=0$ and $|\mu(\tau,x)|\lesssim |Dv||D^2v|$.

Finally we observe that 
\begin{align}\label{eq.norm of dx in normal coordinates}
    \norm{dx}^2_{\phi^\sharp\delta_{\R^{m+1}}} = \det\left(\langle dx^i,dx^j\rangle_{\phi^\sharp \delta_{\R^{k+1}}^{-1}}\right)= \det(k^{-1}) = \det(k)^{-1} = \left(|g| (1 + \tau h^k_k + \tau^2 a) \right)^{-1}, 
\end{align}
where $|a|\lesssim |h|^2 \lesssim |D^2v|^2\lesssim \boldsymbol{\delta}^2$ and so $\norm{D^ka}_{L^\infty(\Omega_{8,9}\times [-10,10])}\lesssim_k \boldsymbol{\delta}^2$.

\subsubsection*{Step 2: calibration in the region $\Omega_{8,9}$} In the the cylinder over $\Omega_{8,9}$ we want to pass from $\omega_v=\omega_{u_1}$ to $d\operatorname{vol}_v=d\operatorname{vol}_{u_1}$. Note that in this region $\bG_{v}$ is a minimal surface with respect to the Euclidean metric. In particular this implies that in the normal neighborhood coordinates all terms of type $\tau h^k_k$ vanish. 

Consider the closed form 
\[\sigma = \phi^{\sharp}\left(\omega_v - d\operatorname{vol}_v\right) = \tau^2 \tilde{\sigma}_1 + \tau \tilde{\sigma}_2\]
with $\sigma =0$ on $\{\tau =0\}$, and $\norm{D^k\tilde{\sigma}_1}_{L^\infty(\Omega_{8,9}\times [-10,10])}\lesssim_k \boldsymbol{\delta}, \norm{D^k\tilde{\sigma}_2}_{L^\infty(\Omega_{8,9}\times [-10,10])}\lesssim \boldsymbol{\delta}^2 $ by Step 1 \eqref{eq.forms in normal coordinates1} and \eqref{eq.forms in normal coordinates2}.
Now we consider the vector field $Y= \partial_\tau$ with associated flow $\psi_t(x,\tau)=(x,e^t\tau)$. It exists for all $t\in (-\infty, 0]$, hence we can apply Lemma \ref{lem.poincare} and obtain the following 
\[ d (I^Y\sigma) = \sigma \text{ and } I^Y\sigma = \tau^2 \mu_1 + \tau\mu_2. \]
In particular $I^Y \sigma$ satisfies 
\begin{equation}\label{eq.estimates on I^Ysigma}
    I^Y\sigma = \tau^2 \mu_1 + \tau\mu_2 \text{ with } \norm{D^k\mu_i}_{L^\infty(\Omega_{8,9}\times[-10,10])}\lesssim_k \boldsymbol{\delta}^i\,.
\end{equation}
For a fixed cut-off function $\theta=\theta(|x|)$ with $\theta=1$ for $|x|<8+\frac13$ and $\theta=0$ for $|x|> 8 +\frac23$ we obtain the desired closed form by 
\begin{equation}\label{eq.the callibration form in Omega89}
    \phi^\sharp\omega = \theta \phi^\sharp \omega_v + (1-\theta) \phi^\sharp d\operatorname{vol}_v +d\theta \wedge I^Y \sigma.
\end{equation}

Since $\phi^\sharp\omega - \phi^\sharp d\operatorname{vol}_v = \theta (\phi^\sharp\omega_v- \phi^\sharp d\operatorname{vol}_v) + d\theta \wedge I^Y\sigma$ we conclude from \eqref{eq.forms in normal coordinates1}-\eqref{eq.forms in normal coordinates2} and \eqref{eq.estimates on I^Ysigma} that 
\[ \norm{\phi^\sharp(\omega -d\operatorname{vol}_v)}_{\phi^\sharp \delta_{\R^{m+1}}}\lesssim \tau^2 \boldsymbol{\delta}+ \tau \boldsymbol{\delta}^2,\]
where we have used in particular the fact that $v$ is minimal in the region and so the linear term $\tau h^k_k$ vanishes in \eqref{eq.forms in normal coordinates2}. By the previous expansion its clear that the same holds for the derivatives.  
Furthermore \eqref{eq.norm of dx in normal coordinates} and again $\tau h_k^k =0$ imply that $\left|\norm{\phi^\sharp d\operatorname{vol}_v}_{\phi^\sharp \delta_{\R^{m+1}}}- 1 \right|\lesssim  \tau^2 \boldsymbol{\delta}^2$ hence we conclude for the norm
\begin{equation}\label{eq.norm of the callibration form in Omega89}
    \norm{\phi^\sharp \omega}_{\phi^\sharp \delta_{\R^{m+1}}}- 1 = \tau c_1 + \tau^2c_2 \text{ with } \norm{D^kc_i}_{L^\infty(\Omega_{8,9}\times[-10,10])} \lesssim_k \boldsymbol{\delta}^i\,.
\end{equation}

\subsubsection*{Step 3: calibration in the region $\Omega_{9,10}$}
In this region, we just set $\phi^\sharp\omega = \phi^\sharp d\operatorname{vol}_v$. Unfortunately, its norm due to \eqref{eq.forms in normal coordinates2} and \eqref{eq.norm of dx in normal coordinates} has the expansion 
\[ \norm{\phi^\sharp \omega}_{\phi^\sharp \delta_{\R^{m+1}}}- 1 = \tau \tilde{c} \text{ with } \norm{D^k\tilde{c}}_{L^\infty(\Omega_{8,9}\times[-10,10])} \lesssim_k \boldsymbol{\delta}\,.\]
In this region we have lost the feature that $v$ is minimal so $\tau h^k_k$ does not vanish. 

\subsubsection*{Step 4: calibration in the region $\Omega_{10,11}$}
The construction is literally the same as in the region $\Omega_{8,9}$ with the only differences that the cut-off $\theta$ this time is chosen so that  
$\theta=0$ for $|x|<10+\frac13$ and $\theta=1$ for $|x|> 10 +\frac23$ and $u_1$ is replaced by $u_2$. 
Hence $\omega$ has in this region the structure of \eqref{eq.the callibration form in Omega89} and its norm has an expansion as in \eqref{eq.norm of the callibration form in Omega89}.

\subsection{The conformal change of the  Euclidean metric}  We will now change the metric suitably so that the form of Proposition \ref{prop:localglue} becomes a calibration.

\begin{lemma}\label{lem.definition of the metric}
Let $\omega$ be as in Proposition \ref{prop:localglue}. There is a conformal change $g=\lambda^2 \,\delta_{\R^{m+1}}$ of the Euclidean metric on $\R^{m+1}$ such that 
\begin{enumerate}[label=(M.\arabic*)]
    \item\label{lem.definitioin of the metric-M1} $\lambda(y) =1 $ if $\dist(y,{\rm spt}(\bG_v))>3$;
    \item\label{lem.definitioin of the metric-M2} $\lambda(y)=\operatorname{comass} \omega(y)$ if $\dist(y, {\rm spt}\, (\bG_v))< 2$;
    \item\label{lem.definitioin of the metric-M3} $\lambda(y) -1= \dist(y,  {\rm spt}\, (\bG_v))\, R$, with $\norm{D^kR}_{L^\infty(\Omega_{7,12}\times \R)}\lesssim_k \boldsymbol{\delta}$ .
\end{enumerate}
\end{lemma}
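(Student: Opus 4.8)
The plan is to build $\lambda$ from the comass of $\omega$ away from $\bG_v$ and interpolate to the constant $1$ using the very same cutoff-and-distance mechanism that controlled $\omega$ in Proposition \ref{prop:localglue}. First I would recall from \ref{prop:localglue-F3} that $\operatorname{comass}\omega(y) = 1 + \dist(y,\bG_v)\,R_\omega(y)$ with $\|D^k R_\omega\|_{L^\infty}\lesssim_k \boldsymbol\delta$ on the relevant region; in particular $\operatorname{comass}\omega$ is a smooth positive function (for $\boldsymbol\delta<\boldsymbol\delta_0$ small) which equals $1$ exactly on $\bG_v$ and stays within $O(\boldsymbol\delta)$ of $1$ on $\Omega_{7,12}\times[-10,10]$. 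I would then fix a cutoff $\chi=\chi(\dist(y,\mathrm{spt}(\bG_v)))$ that is $\equiv 1$ for distance $<2$ and $\equiv 0$ for distance $>3$, and set
\[
\lambda(y) := \chi(y)\,\operatorname{comass}\omega(y) + \bigl(1-\chi(y)\bigr)\,.
\]
Properties \ref{lem.definitioin of the metric-M1} and \ref{lem.definitioin of the metric-M2} are then immediate from the definition of $\chi$, and $g:=\lambda^2\delta_{\R^{m+1}}$ is a genuine (smooth, positive-definite) conformal change since $\lambda>0$.

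For \ref{lem.definitioin of the metric-M3} I would write $\lambda(y)-1 = \chi(y)\bigl(\operatorname{comass}\omega(y)-1\bigr) = \chi(y)\,\dist(y,\mathrm{spt}(\bG_v))\,R_\omega(y) = \dist(y,\mathrm{spt}(\bG_v))\,R(y)$ with $R := \chi\,R_\omega$. The estimate $\|D^k R\|_{L^\infty(\Omega_{7,12}\times\R)}\lesssim_k\boldsymbol\delta$ then follows from the Leibniz rule, the bound on $D^k R_\omega$ from \ref{prop:localglue-F3}, and the fact that $\chi$ and all its derivatives are bounded by universal constants; the product is supported where $\dist(y,\mathrm{spt}(\bG_v))\le 3$, so there is no issue at large distance. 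One small point to record is that $\dist(\cdot,\mathrm{spt}(\bG_v))$ is only Lipschitz, not smooth, near the cut locus and on $\bG_v$ itself; but on the region $\Omega_{7,12}\times\R$ where the estimate is claimed we are inside the tubular neighborhood $\phi$ of $\bG_v$ from Step 1, where $\bG_v$ is a smooth graph with small gradient, so the distance function is smooth on $0<\dist<\text{(tube radius)}$ and the factorization $\lambda-1 = \dist\cdot R$ with $R = \chi R_\omega$ makes sense with $R$ itself smooth (the product $\dist\cdot R_\omega$ extending smoothly across $\bG_v$ because that is exactly the form $\operatorname{comass}\omega-1$ produced in Proposition \ref{prop:localglue}).

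The only mild obstacle is bookkeeping: one must make sure the neighborhood on which $\operatorname{comass}\omega$ is defined and satisfies \ref{prop:localglue-F3} contains $\{\dist(\cdot,\mathrm{spt}(\bG_v))<3\}\cap(\Omega_{7,12}\times\R)$, which forces the normal tubular neighborhood radius in Step 1 to be taken $\ge 3$ (harmless, since the graphs have $L^\infty$-gradient $\boldsymbol\delta$ and the vertical window was taken to be $[-10,10]$); and that the formula for $\lambda$ transitions to the constant $1$ before one exits that neighborhood, which is why the inner/outer radii of $\chi$ are $2$ and $3$. With these choices made, all three properties hold and the lemma is proved; I expect no analytic difficulty beyond the routine derivative estimates already encapsulated in Proposition \ref{prop:localglue}.
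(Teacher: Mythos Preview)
Your proposal is correct and is essentially identical to the paper's own proof: writing $\lambda = \chi\cdot\operatorname{comass}\omega + (1-\chi)$ is the same as the paper's $\lambda = 1 + \theta(\dist(\cdot,\bG_v))(\|\omega\|-1)$, since the paper observes that for $m$-forms in $\R^{m+1}$ the comass coincides with the usual norm. Your discussion of (M.3) and the bookkeeping on the tubular neighborhood radius spell out details the paper leaves implicit, but the argument is the same.
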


 We notice that for a general $k$-form $\alpha$ we have 
\[ \operatorname{comass}_{\lambda^2 g}\alpha = \lambda^{-k}\operatorname{comass}_{g}\alpha\,.\]
Lemma \ref{lem.definition of the metric} gives a metric for which the closed form $\omega$ is a calibration of $\mathbf{G}_v$ in some tubular neighborhood of its support, thanks to the choice $\lambda= \operatorname{comass}_g{\omega}$. Observe that this choice is not in general a good one because the $\operatorname{comass}$ norm is not differentiable. However, the comass norm is smooth for $n$-forms in $\R^{n+1}$, because the latter are all simple: for simple forms $\operatorname{comass}_g\omega$ coincides with the usual norm induced by the Riemannian metric on the exterior product of the cotangent bundle.

\begin{proof}
    %Recall that $\norm{w}=1$ on $\bG_v$, compare \ref{prop:localglue-F1} and $\norm{w(x)}=1$ for all $x \notin \Omega_{8,11}\times \R$. Since 
    %\[\norm{D(\norm{w}-1)}_{L^\infty} \lesssim \norm{Dw}_{L^\infty}\lesssim \boldsymbol{\delta}\]
    %we deduce that the same holds true for $k=0$ i.e. $\norm{\norm{w}-1}_{L^\infty}\lesssim %\boldsymbol{\delta}$ and therefore for all $k$. 
    For a smooth $\theta$ with $\theta(t) \equiv 1$ for $t< 2$ and $\theta(t)=0$ for $t>3$, the interpolation
    \[ \lambda(x)=1+\theta(\dist(y,  {\rm spt}\, (\bG_v)))\left(\norm{\omega(y)}-1\right)\]
    has the desired properties, by Proposition \ref{prop:localglue}.
\end{proof}

\section{The construction of the example}

\subsection{A single gluing step}\label{subsec.single gluing}

In this section we glue, at a fixed distance from the origin, one between the floating disk and the catenoid to the base model. The key point is that the conformal factor for the new metric obtained through Lemma \ref{lem.definition of the metric} is invariant under reflection through $\{y_{m+1}=0\}$. Let us note that the estimate in \ref{list.properties of lambda} states that the derivatives of the conformal factor are bounded up to the  order $m+1$.

\begin{proposition}[Single gluing step]\label{prop:single gluing} Let $u, c_\rho$ and $\tilde{c}_\rho$ be as in Section \ref{sec.the models}. Let $0<r\leq r_0^{N-\frac{N-1}{m}}$ and let $u_2=\frac{u(r_0+rx)}{r}$ and $u_1$ be either $c_\rho$ or $\tilde{c}_\rho$, with $\rho$ chosen as in Section \ref{sec.the models}, with parameter $H=u_2(10)$ so that 
    \[ H \approx \frac{r_0^N}{r} \text{ and } \rho,\tilde{\rho} \lesssim  H\,.\]
Then there are a function $v\colon B_{12}\setminus B_{\theta H}\to \R$, two closed forms $\omega_\pm$ defined on $\{\dist(y,\bG_{\pm v})<5\}\cap \bC_{12}\setminus \bC_{\theta H}$ and a conformal factor $\lambda_2\colon \bC_{12}\to \R$ such that the following properties are satisfied.
\begin{enumerate}[label=(G.\arabic*)]
    \item\label{list.properties of v} $v=u_2$ in $B_{12}\setminus B_{11}$ and $v=u_1$ in $B_8\setminus B_{\theta H}$, and moreover 
     $\norm{D^k(v-u_2)} \lesssim H^{m-1}$ in $\Omega_{7,12}$. We will consider the surface
     \begin{align*}
    \tilde{\Gamma}_{r_0,r}=\begin{cases}  \a{\bG_{v}\res B_{\theta H}^c} + \boldsymbol{s}_\sharp\a{\bG_{v}\res B_{\theta H}^c} &\text{ in case of the catenoid }\\
    \a{B_{\theta H}\times \{0\}} \cup \a{\bG_{v}\res B_{\theta H}^c} \cup \boldsymbol{s}_\sharp\a{\bG_{v}\res B_{\theta H}^c} &\text{ in case of the ``floating disk''}.
    \end{cases}
\end{align*}
    \item\label{list.properties of lambda} $\lambda_2=1$ inside of $\bC_{8}$ and outside $\bC_{11}$, $\lambda_2=\lambda_2\circ \boldsymbol{s}$ and moreover for any $\alpha\in \N_0^m, k\in \N_0$ \[|\partial_x^\alpha \partial_{y_{m+1}}^k(\lambda_2(y)-1)| \lesssim_{k+|\alpha|} H^{m-k} \]
    We will consider the conformal change of the euclidean metric given by $g=\lambda_2^2 \delta_{\R^3}$ in $\bC_{12}$.
    \item\label{list.properties of omega} $\omega=\omega_{u_1}$ on $\bC_8\setminus \bC_{\theta H}$\footnote{Note that $\omega_{u_1}$ is defined in the full cylinder $\bC_{\theta H}^c$.} satisfying $\omega|_{\bG_{v}} = d\operatorname{vol}_{v}$ in  $\bC_{12}\setminus \bC_{\theta H}^c$ and $\operatorname{comass}_g(\omega(y))=1$ if $y \in \bC_{12}\setminus \bC_{\theta H}^c$  and $\dist(y,\bG_{ v})< 2$
\end{enumerate}
\end{proposition}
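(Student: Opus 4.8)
The plan is to assemble Proposition \ref{prop:single gluing} as a direct application of the machinery of Section \ref{sec.the calibration form}, checking that the hypotheses of Proposition \ref{prop:localglue} and Lemma \ref{lem.definition of the metric} hold after rescaling, and then symmetrizing by $\boldsymbol{s}$. First I would set up the rescaled configuration: with $u_2=\frac{u(r_0+rx)}{r}$ and $u_1$ equal to $c_\rho$ or $\tilde c_\rho$, one checks that both are minimal graphs over $\Omega_{7,12}$ in the Euclidean metric, and that they meet at a point there: indeed by \eqref{eq.base gradient} the rescaled base graph $u_2$ is essentially $\frac{r_0^N}{r}$ near radius $10$ with $\norm{\nabla u_2}_{L^\infty(\Omega_{7,12})}\lesssim \frac{r_0^{N-1}}{r^{?}}$ — here the only real verification is that both gradients are smaller than $\boldsymbol{\delta}_0$ for $r$ small, which follows from \eqref{eq.base gradient}, \eqref{eq.derivative catenoid}/\eqref{eq.estimates for catenoid in m>2} and \eqref{eq.derivative floating disc}/\eqref{eq.estimates for floating disk m>2} together with the choice $H=u_2(10)\approx \frac{r_0^N}{r}$ and $\rho,\tilde\rho\lesssim H$. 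Having a common zero of $u_1$ and $u_2$ in $\Omega_{7,12}$ is where one uses that the catenoid (or floating disk) end oscillates across $0$: since $c_\rho(10)=H=u_2(10)$ but both are genuine profiles, one adjusts the vertical translation / the choice of which of the two catenoid solutions is taken so that \ref{assumption.graph1} is met. This gives Assumption \ref{ass:glue} with $\boldsymbol{\delta}\lesssim H$ (or $\lesssim H^{m-1}$ in the higher-dimensional case via \eqref{eq.estimates for catenoid in m>2}).

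Next I would invoke Proposition \ref{prop:localglue} to obtain the glued function $v$ on $\Omega_{7,12}$ — extended by $u_1=c_\rho$ (resp. $u_1=\tilde c_\rho$, resp. the disk) on $B_8\setminus B_{\theta H}$ and by $u_2$ on $B_{12}\setminus B_{11}$ — and the closed form $\omega$ with $\omega|_{\bG_v}=d\operatorname{vol}_v$, $\omega=\omega_{u_1}$ on $\Omega_{7,8}\times[-10,10]$ and $\omega=\omega_{u_2}$ on $\Omega_{11,12}\times[-10,10]$, together with property \ref{prop:localglue-F3}. Rescaling back the estimate \eqref{eq:vestimate1} and \ref{prop:localglue-F3}, using $\boldsymbol{\delta}\lesssim H$ (resp. $H^{m-1}$), yields $\norm{D^k(v-u_2)}_{L^\infty(\Omega_{7,12})}\lesssim H^{m-1}$, which is \ref{list.properties of v}. (Strictly, the clean $H^{m-1}$ in the statement comes from the higher-dimensional bounds \eqref{eq.estimates for catenoid in m>2}–\eqref{eq.estimates for floating disk m>2}; in $m=2$ one gets the weaker linear bound, consistent with Remark \ref{r:loss-in-2d}, and the statement should be read accordingly.) Then I apply Lemma \ref{lem.definition of the metric} to produce the conformal factor $\lambda$, equal to $1$ outside a $3$-neighborhood of $\spt\bG_v$ and equal to $\operatorname{comass}_g\omega$ inside a $2$-neighborhood, with $\lambda-1=\dist(\cdot,\spt\bG_v)R$ and $\norm{D^kR}\lesssim \boldsymbol{\delta}$.

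The genuinely new point here — and the step I expect to require the most care — is the reflection symmetry of the conformal factor asserted in \ref{list.properties of lambda}. The form $\omega$ produced above calibrates $\bG_v$; applying $\boldsymbol{s}$ gives a form $\boldsymbol{s}^\sharp\omega$ calibrating $\boldsymbol{s}_\sharp\bG_v=\bG_{-v}$, and I would simply define $\omega_+:=\omega$, $\omega_-:=\boldsymbol{s}^\sharp\omega$. The issue is that the metric $g=\lambda^2\delta$ attached to $\bG_v$ need not a priori coincide, on the overlap region near $\{y_{m+1}=0\}$, with the reflected metric attached to $\bG_{-v}$. However, since $\lambda(y)=\operatorname{comass}_g\omega(y)$ only on $\{\dist(y,\spt\bG_v)<2\}$ and equals $1$ on $\{\dist(y,\spt\bG_v)>3\}$, and since on $\Omega_{7,8}$ (resp. $\Omega_{11,12}$) we have $\omega=\omega_{u_1}$ (resp. $\omega_{u_2}$) whose comass is identically $1$, the locus where $\lambda\neq 1$ is confined to the gluing annulus $\bC_{11}\setminus\bC_8$ and to the $3$-tube around $\bG_v$; for $r$ small (hence the two sheets far apart at those radii, since $\norm{v}\approx H\to\infty$ after rescaling — or rather, after the global rescaling the sheets separate) these tubes around $\bG_v$ and $\bG_{-v}$ are disjoint there, so one can define $\lambda_2:=\max\{\lambda,\ \lambda\circ\boldsymbol{s}\}$ (equivalently, add the two bumps $\lambda-1$ and $(\lambda-1)\circ\boldsymbol{s}$, whose supports are disjoint) to get a single conformal factor that is manifestly $\boldsymbol{s}$-invariant, equals $1$ in $\bC_8$ and outside $\bC_{11}$, and for which $\omega_+$ calibrates $\bG_v$ and $\omega_-$ calibrates $\bG_{-v}$. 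The derivative bound $|\partial_x^\alpha\partial_{y_{m+1}}^k(\lambda_2-1)|\lesssim_{k+|\alpha|} H^{m-k}$ then follows from \ref{lem.definitioin of the metric-M3}: each $y_{m+1}$-derivative that falls on the $\dist(y,\spt\bG_v)$ prefactor costs a power of the separation, which is $\gtrsim H$, hence lowers the power of $H$ by one; the remaining derivatives act on $R$ and cost only the $\boldsymbol{\delta}\lesssim H^{m-1}$ factor, giving overall $H^{m-1}\cdot H^{1-k}=H^{m-k}$. Finally, properties \ref{list.properties of v}–\ref{list.properties of omega} for the catenoid case transcribe verbatim, and in the floating-disk case one additionally notes that the flat disk $\a{B_{\theta H}\times\{0\}}$ is calibrated by the constant form $\star dy_{m+1}$, which matches $\omega_{u_1}=\omega_{\tilde c_\rho}$ along the common sphere $\partial B_{\theta H}$ up to the angle condition built into the definition of $\tilde c$ in Section \ref{subsec.floating disk m=2}, so the pieces fit together into $\tilde\Gamma_{r_0,r}$ as a calibrated (mod $3$) object.
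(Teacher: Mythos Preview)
Your setup (verifying Assumption~\ref{ass:glue} with $\boldsymbol{\delta}\lesssim H^{m-1}$ and then invoking Proposition~\ref{prop:localglue} and Lemma~\ref{lem.definition of the metric}) matches the paper and is fine. The genuine gap is in your symmetrization step for \ref{list.properties of lambda}.

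You claim that the $3$-tubes around $\bG_v$ and $\bG_{-v}$ are disjoint because ``$\norm{v}\approx H\to\infty$ after rescaling''. This is backwards: in the rescaled picture $v\approx H$ on $\Omega_{7,12}$, and $H\approx r_0^{N}/r=r_0^{N-p}\to 0$ as $r_0\to 0$ (recall $1<p<N$). So the two sheets sit at heights $\pm H$ with $H$ \emph{small}, and the radius-$3$ tubes around them overlap massively near $\{y_{m+1}=0\}$. Consequently the supports of $\lambda-1$ and $(\lambda-1)\circ\boldsymbol{s}$ are not disjoint, and neither the $\max$ nor the sum produces a smooth $\boldsymbol{s}$-invariant factor with the stated bounds. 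Your derivative heuristic, which relies on this disjointness, therefore also breaks down.

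The paper handles this differently. It first sets
\[
\lambda_1(y)=\begin{cases}\lambda(y)&y_{m+1}>0\\ \lambda\circ\boldsymbol{s}(y)&y_{m+1}<0\end{cases}
\]
which is only Lipschitz across $\{y_{m+1}=0\}$ (with Lipschitz constant $\lesssim H^{m-1}$), and then mollifies \emph{only in the $y_{m+1}$ variable} with a kernel whose scale $\epsilon(y_{m+1})$ is $\approx H$ in a strip $\{|y_{m+1}|\lesssim cH\}$ and vanishes outside it. Two points make this work: first, since $v\ge cH$ on $\Omega_{7,12}$, the mollification strip does not touch either graph, so the identity $\lambda_2=\operatorname{comass}\omega$ persists on $\{\dist(y,\bG_{\pm v})<2\}$ and \ref{list.properties of omega} is preserved; second, each $\partial_{y_{m+1}}$ landing on the mollification kernel costs a factor $H^{-1}$, while the one derivative that lands on $\lambda_1$ contributes $H^{m-1}$, giving precisely $|\partial_x^\alpha\partial_{y_{m+1}}^k(\lambda_2-1)|\lesssim H^{m-1}\cdot H^{1-k}=H^{m-k}$ via Fa\`a di Bruno. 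This is where the anisotropic bound in \ref{list.properties of lambda} actually comes from, not from a distance prefactor as you suggest.

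A minor side remark: your discussion of \ref{assumption.graph1} is muddled (the catenoid end does not ``oscillate across $0$''). What is actually used is that the choice of $\rho$ forces $c_\rho(10)=u_2(10)=H$, so $u_1$ and $u_2$ agree at a point of $\Omega_{7,12}$, which is all that \eqref{eq.graph distant} needs.
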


\begin{proof}
%Let us fix two parameters $0<r\ll r_0$ as in the proposition and consider the functions $u$ as in Subsection \ref{subsec.base}.
Recall that we would like to connect the two sheets of $\Gamma_N$  to one of our two local models, the catenoid or the floating disc, inside the cylinder $\bC_{12 r}(r_0 e_1)$. To do so, we will rescale via $\eta_{r_0e_1,r}$ to size 1  (and eventually rescale back).

% Since $\eta_{r_0,re_{1}}\Gamma_N$ and all the models are invariant under the reflection $\boldsymbol{s}$ we will perform the gluing inside $\bC_{12}\setminus \bC_{7}$ in a reflection invariant fashion. Hence it is sufficient to describe it in $y_{m+1}>0$.

Firstly, let us set $u_2(x)= \frac{u(r_0+ rx)}{r}$, i.e. $\bG_{u_2}\res \bC_{12} = (\eta_{r_0 e_1, r})_\sharp \bG_{u}\res \bC_{12 r}(r_0 e_1)$.
Observe that \eqref{eq.base gradient} implies that with respect to the homogeneous harmonic polynomial $h$ of order $N$ we have
\begin{equation}\label{eq.base gradient-rescaled}
    \left|u_2(x)- \frac{h(r_0+rx)}{r}\right|+ r|\nabla u_2(x) - \nabla h(r_0+rx)|\lesssim r_0^{2N}\, \quad \forall x \in B_{12}\,.
\end{equation}
In particular this implies that  $u_2 \ge c H$ in $B_{12}$. Now we want to pass from $u_2$ to $c_\rho$ or $\tilde{c}_{\tilde{\rho}}$ in $\Omega_{8,11}$ to obtain a new function $v$ defined in $B_{12}\setminus B_{\theta H}$ where $\theta H\in \{\rho, \tilde{\rho}\}$ is the radius of the hole, or floating disk respectively. 
To be more precise, 
% and $c_\rho$ and $\tilde{c}_{\tilde{\rho}}$ as in Subsections \ref{subsec.catenoid} and \ref{subsec.floating disk}.
let us fix $\rho, \tilde{\rho}$ according to section \ref{subsec.catenoid m=2}, \ref{subsec.catenoid m>2}, \ref{subsec.floating disk m=2} and \ref{subsec.floating disk m>2}, with parameters $H=u_2(10)$ and $R=10$. In particular we have 
\[ H \approx \frac{r_0^N}{r} \text{ and } \rho,\tilde{\rho} \lesssim  H\,.\]
Moreover we have 
\[ \frac{c_\rho(r_1)}{c_\rho(r_2)}+ \frac{\tilde{c}_\rho(r_1)}{\tilde{c}_\rho(r_2)} \le C \quad \forall r_1,r_2 \in [7,12] \]
In particular this implies that $c_\rho(x), \tilde{c}_\rho(x) \ge c H$  for some $c>0$ and $7\le |x|\le 12$.
Finally we recall that,

%By \eqref{eq.bound for rho} and \eqref{eq.derivative floating disc} we have $\rho, \tilde{\rho}\lesssim \frac{(r_0+11r)^{N}}{r}$ and moreover
\begin{align*}
    \norm{\nabla u_2}_{L^\infty}+\norm{\nabla c_{\rho}}_{L^\infty}+\norm{\nabla\tilde{c}_{\tilde{\rho}}}_{L^\infty} &\lesssim H^{m-1}\,.
\end{align*}
In particular, the estimate on $u_2$ follows from our choice of $r$ in comparison to $r_0$, which guarantees that $\frac{\norm{\nabla u}_{L^\infty}}{H^{m-1}} \approx \frac{r^{m-1}r_0^{N-1}}{r_0^{N(m-1)}}\lesssim 1$.

Depending on our choice of connecting model, that is catenoid or floating disk, we choose $u_1=c_\rho$ or $u_1=\tilde{c}_\rho$ respectively. Now we may apply the construction of the previous section, Proposition \ref{prop:localglue} and Lemma \ref{lem.definition of the metric}, to obtain that properties \ref{list.properties of v} and \ref{list.properties of omega} are satisfied.

It remains to prove \ref{list.properties of lambda} by connecting the conformal factor $\lambda$ of Lemma \ref{lem.definition of the metric} in $y_{m+1}>0$ to $\lambda\circ \boldsymbol{s}$ in $\{y_{m+1}<0\}$ inside $\bC_{12}\setminus \bC_7$, where we glued $u_1$ to $u_2$. To do this, we define the Lipschitz continuous 
\[ \lambda_1(y) = \begin{cases}
    \lambda(y) &\text{ if } y_{m+1}>0\\
    \lambda \circ \boldsymbol{s}(y) &\text{ if } y_{m+1}<0
\end{cases}\,.\]
Note that $\lambda_1$ is smooth in all the directions except $e_{m+1}$, i.e. $\partial^\alpha\lambda_1$ exists for all $\alpha$ with $\alpha_{m+1}=0$ and its Lipschitz constant is bounded by $H^{m-1}$ due to \ref{list.properties of lambda}. 

Let us fix a symmetric bump function $ \epsilon_0(t)$ supported in $(-1,1)$ with $\epsilon_0 (t)=1$ for $|t|\le \frac12$. Now we set $\epsilon(y_{m+1})= H \epsilon_0( \frac{8y_{m+1}}{cH})$ where $c>0$ is the constant in \eqref{list.properties of v} with $v\ge c H$. Hence we have $|\epsilon^{(k)}(y_{m+1})| \lesssim_k H^{1-k}$ with $\epsilon(y_{m+1})=H$ if $|y_{m+1}|\le \frac{c}{2}H$.  For a one-dimensional convolution kernel $\varphi(t)$ with $\spt(\varphi)\subset (-1,1)$ we define the conformal factor in $\bC_{12}\setminus \bC_7$ to  be 
\begin{equation}\label{eq.conformal factor}
    \lambda_2(x,y_{m+1})= \int \varphi(s)\, \lambda_1(x,y_{m+1} + \epsilon(y_{m+1})s)\,ds\,.
\end{equation}
Now we can compute and estimate its derivatives.
We distinguish three regions $|y_{m+1}|>cH$, $\frac{c}{4} H \le |y_{m+1}| \le cH$ and $|y_{m+1}| \le \frac{c}{4}H$.
In the first region, we restrict ourself to $y_{m+1}$, $\epsilon(y_{m+1})=0$ and so $\lambda_2=\lambda$ hence $|D^k\lambda(y)|\lesssim_k H^{m-1}$.

In the second region, $\frac{c}{4} H \le y_{m+1} \le c H$, by our choice of $\epsilon$ we have that $\lambda_1=\lambda$ in \eqref{eq.conformal factor} and hence for any $\alpha\in \N_0^m, k\in \N_0$ 
\begin{align*}
    \partial_x^\alpha \partial_{y_{m+1}}^k\lambda_2(y) = \int \varphi(s)\, \partial_{y_m+1}^k \left( \partial_x^\alpha\lambda(x,y_{m+1} + \epsilon(y_{m+1})s)\right)\, ds 
\end{align*}
Using the  Fa\`a di Bruno, the ``worst'' term to estimate, given our bounds, is
\[
|\partial_x^\alpha\partial_{y_{m+1}}\lambda(x,y_{m+1} + \epsilon(y_{m+1})s)\epsilon^{(k)}(y_{m+1})|\lesssim_{k+|\alpha|} H^{m-1} H^{1-k}\, ,
\]
and hence we conclude  the argument in the second region.

%{\color{red}
%\sout{$|\partial_x^\alpha \partial_{y_{m+1}}^k\lambda_2(y)| \lesssim_{k+|\alpha|} H^{m-k}$}
%}

Finally in the third region, $|y_{m+1}|\le \frac{c}{4} H$, where $\epsilon(y_{m+1})=H$ is constant, hence we can use the classical convolution estimates to deduce that 
\[
\bigl|\partial_x^\alpha \partial_{y_{m+1}}^k\lambda_2(y)\bigr| =  \Big|\int \partial^{k-1}_{y_{m+1}}\varphi_H(y_{m+1}-s)\, \partial_x^\alpha\partial_{y_{m+1}}\lambda(x,s)\, ds  \Big|\lesssim_{k+|\alpha|} H^{1-k} H^{m-1} \,.
\]

\end{proof}

\subsection{The iterative gluing}\label{subsec.iterative gluing}
%Firstly let us define the callibration form $\omega_0=\omega_u$ that callibrates $\bG_u$. 
% a form $\omega_0$ on $\R^{m+1}\setminus \{y_{m+1}=0\}$ such that calibrates $\reg(\Gamma_N) = \Gamma_N \setminus \{y_{m+1}=0\}$.
% This can be achieved by setting 
% \[\omega_0 = \begin{cases}
%     \chi_{\{ u>0\}}\omega_u+ \chi_{\{ u>0\}}\boldsymbol{s}^\sharp\omega_u &\text{ on } \{ y_{m+1} >0\}\\
%     \chi_{\{ u>0\}}\boldsymbol{s}^\sharp\omega_u+ \chi_{\{ u>0\}} \omega_u &\text{ on } \{ y_{m+1} >0\}
% \end{cases} \]
% Note that $\omega_0$ is smooth on $\R^{m+1} \setminus \left(\{x \colon h(x)=0\}\times \R\right)$.

To construct our example, let us fix a power $1<p<N$, e.g. $p=2$, and fix the choice $r=r_0^p$, i.e. $H\approx r_0^{N-p}$. Furthermore, fix a sequence $r_{0,k} \downarrow 0$ with associated heights $H_k\approx r_{0,k}^{N-p}$. The chosen sequence should satisfy $r_{0,{k+1}}-r_{0,k} \gg r_{0,k}^p$ and $\sum_{k} r_{0,k}^{m-1} < \infty$, e.g. $r_{0,k}=2^{-k}$. 
We consider the sequence of cylinders $B_k\times \R=\bC_k=\bC_{r_{0,k}^p}(r_{0,k} e_1)$ and for each of them let $\underline{B}_k \times \R=\underline{\bC}_k=\bC_{\theta_k H_k}(r_{0,k} e_1)$ be the smaller cylinder where $\theta_k H_k$ is the radius of the hole/ disk of the chosen model with height $H_k$. By an abuse of notation we will denote with $c B$ the ball  with the same center but with radius $c\operatorname{radius}(B)$ and the same in case of cylinders. 

Now in each, of the cylinders $12 \bC_k$ we perform the single step gluing.
In fact, we construct a sequence of functions $u_k \colon U_k \to \R$ on perforated domains $U_k\subset \R^m$, a sequence of associated varifolds $V_k$, a sequence of forms $\omega_k$ and a sequence of conformal factors $\lambda_k$. In the case of the catenoid as gluing model, $V_k$ is the varifold associated to a current $T_k$ with $\partial T_k =0$. 

To be more precise, let $u_0 = u$ be the function of the base model, $U_0=\R^{m}$, $\omega_0=\omega_{u_0}$ and $\lambda_0 =1$. Furthermore, we set $T_0=\a{\bG_{u_0}} + \boldsymbol{s}_\sharp \a{\bG_{u_0}}$  in the case of Theorem \ref{t:uno} and $T_0=\a{\bG_{u_0}} - \boldsymbol{s}_\sharp \a{\bG_{u_0}}$ in the case of Theorem \ref{t:due}, with associated varifold $V_0$.

Now assume that the elements in the $k-1$ step have been constructed. 
We set $\eta_k=\eta_{r_{0,k}e_1,r_{0,k}^p}$ and apply Proposition \ref{prop:single gluing} with respect to the parameters $r_0=r_{0,k}, r=r_{0,k}^p$ to obtain function, form and conformal factor, $v,\omega, \lambda$.  The $k$th objects are then obtained as follows
\begin{itemize}
    \item The perforated domain is $ U_k=U_{k-1}\setminus \underline{B}_k$.
    \item The function $u_k\colon U_k \to \R$ is given by 
    \[u_k=\begin{cases}
        u_{k-1} \text{ on } U_k\setminus 12B_k\\
        r v\circ \eta_k \text{ on } U_k\cap 12B_k 
    \end{cases}\,.\]
Note that it satisfies $u_k|_{12B_k^c}=u_l|_{12B_k^c}$ for every $l>k$ and moreover
    \[\norm{u_k-u_{k-1}}_{C^0(\overline{U}_k)} \lesssim 2r H_k\lesssim r_{0,k}^N\,.\]
    %\[\norm{u_k-u_{k-1}}_{C^j(U_k\setminus 7B_k)} = \norm{v\circ \eta_k-u_{k-1}}_{C^j(12 B_k\setminus 7B_k)} \le r^{-j} \Bigl\|v - \frac{ u_{k-1}\circ \eta_k^{-1}}{r}\Bigr\|_{C^j(B_{12}\setminus B_{7})}\lesssim_j r_{0,k}^{-pj} H_k^{m-1}\,.\]
    \item  The currents are
    \[ T_k=\begin{cases} \a{\bG_{u_k}\res (U_k\times \R)} + \boldsymbol{s}_\sharp\a{\bG_{u_k}\res U_k\times \R} &\text{ in the case of the catenoid model }\\
    \a{\bG_{u_k}\res (U_k\times \R)}  - \boldsymbol{s}_\sharp\a{\bG_{u_k}\res U_k\times \R} + \sum_{l \le k} \a{\underline{B}_k\times \{0\}} &\text{ in the case of floating disks,} \end{cases} \]
    having as boundaries $\partial T_k=0$ in the case of catenoids and $\partial T_k = \sum_{l \le k } \a{ \partial \underline{B}_k \times \{0\}}$ in the case of ``floating disks''. Hence we have the mass estimate
    \[\norm{T_k}(\bC_R)\lesssim \norm{T_{k-1}}(\bC_R)+ C r_{0,k}^m\,, \]
    and in case of the ``floating'' disk the mass estimate of the boundary
    \[\norm{\partial T_k}\lesssim \norm{\partial T_{k-1}} + |\partial B_1|\, (\theta_kH_k)^{m-1} \lesssim c\sum_{l\le k}r_{0,k}^{m-1}\,.\]
    
    % $T_k= \a{\bG_{u_k}\res (U_k\times \R)} + \boldsymbol{s}_\sharp\a{\bG_{u_k}\res U_k\times \R}$ such that
    %varifold $V_k$ by 
    %\[V_k=V_{k-1}\res (12\bC_k)^c \cup (\eta_{k}^{-1})_\sharp \tilde{\Gamma}_{r_0,r} \]
     %in case of the catenoid and $V_k=\a{\bG_{u_k}\res (U_k\times \R)} \cup \boldsymbol{s}_\sharp\a{\bG_{u_k}\res U_k\times \R}\cup \bigcup_{l\le k} \a{\underline{B}_k\times \{0\}}$. It satisfies 
    % \[\norm{T_k}\le \norm{T_{k-1}} + C r_{0,k}^m
    % %\quad\text{and}\quad 
    % %V_k\res 12\bC_k^c=V_l\res 12\bC_k^c\,,\,\,l>k\,.
    % \]
    \item The calibration form, using the fact that $\dist(\eta_k(y),\bG_v) = \frac{1}{r} \dist(y, {\eta_k^{-1}}_\sharp \bG_v)$ and for $y\in \bC_k$ we have $r\approx |x|^p$, is defined by
    \[\omega_k = \begin{cases}
        \omega_{k-1} &\text{ in } (U_k\setminus 12B_k)\cap \{ \dist(y,{\rm spt}(\bG_{u_k}))\le \delta |x|^p\}\\
        \eta_k^\sharp \omega &\text{ in }\eta_k^{-1}(B_{12}\setminus B_{\theta_kH_k}) \cap \{ \dist(y,{\rm spt}(\bG_{u_k}))\le \delta |x|^p\}\,.
    \end{cases}\]
    \item The conformal factor is defined by
    \[
    \lambda_k=
    \begin{cases}
    \lambda_{k-1} & \text{ in } 12\bC_k^c \\
    \lambda \circ \eta_k &\text{ in }12\bC_k\,, 
    \end{cases}
    \]
    and it satisfies 
    \[\norm{\lambda_k-\lambda_{k-1}}_{C^j} = \norm{1-\lambda\circ\eta_k}_{C^j(12\bC_k\setminus 7\bC_k)}\le r^{-j} \norm{1-\lambda\circ\eta_k}_{C^j(\bC_{12}\setminus \bC_{7})}\lesssim_j r_{0,k}^j H_k^{m-j}\,.\]
\end{itemize}

 It follows that the conformal factors $\lambda_k$ converges to $\lambda$ in $C^j$ for all $j<m$ such that $r_{0,k}^{-pj} H_k^{m-j} \downarrow 0$, which is satisfied for $m(N-p)\ge jN$. 
In fact by interpolation we have 
\[ \norm{\lambda_k - \lambda}_{C^{j,\alpha}} \lesssim \norm{\lambda_k-\lambda}_{C^j}^{1-\alpha} \norm{\lambda_k-\lambda}_{C^{j+1}}^{\alpha}\lesssim r_{0,k}^{(1-\alpha)(m(N-p)-jN)+\alpha(m(N-p)-(j+1)N)}\downarrow 0\]
if $m(N-p) > (j+\alpha)N$. Hence for any $\alpha<1$ we can find a sufficiently large $N$ such that $N(m-1 +\alpha)< m(N-p)$. When $m=1$, we need $N\alpha<N-p$ and also, from Proposition \ref{prop:single gluing}, $p>1$, which can both be satisfied for sufficiently large $N$ depending on $\alpha\in (0,1)$. However, for $m\geq 2$, it is easy to see that no such choice of $N$ works. Moreover $\lambda=\lambda \circ \boldsymbol{s}$ and we set $g=\lambda\,\delta_{\R^{m+1}}$.

Next we consider the perforated domain 
\[U:=\R^m\setminus \bigcup_k\underline{B}_k=\lim_k U_k\]
The functions $u_k\colon U \to \R$ converge uniformly in $C^1_{loc}(\overline{U}\setminus \{0\})\cap C^0_{loc}(\overline{U})$ to a $C^1$ function $u$, since they stabilize. Moreover, taking into account the previously established mass bounds, by the compactness of integral currents,  the currents $T_k$ converge to a current  $T$ which is given by 
\[ T=\begin{cases}\a{\bG_{u}\res (U\times \R)} + \boldsymbol{s}_\sharp\a{\bG_{u}\res U\times \R} &\text{ in the case of the catenoid model }\\
    \a{\bG_{u}\res (U\times \R)}  - \boldsymbol{s}_\sharp\a{\bG_{u}\res U\times \R} + \sum_{k} \a{\underline{B}_k\times \{0\}} &\text{ in the case of floating disks}\end{cases}\]

The forms $\omega_k$ converge locally uniformly in $C^1$ on $(U\times \R)\cap \{\dist(y,\bG_{u})<\delta\,|x|^p\}$ to a close form $\omega$, such that $\operatorname{comass}_g(\omega)\leq 1$ where it is defined, and moreover $\omega=d\,\operatorname{vol}_u$ on $\bG_u$.
In the case of the floating disk we can also consider the associated varifold 
\[
V= \a{\bG_{u}\res (U\times \R)} + \boldsymbol{s}_\sharp\a{\bG_{u}\res U\times \R} +  \sum_k \a{\underline{B}_k\times \{0\}}\,.
\]

% Finally if our model is the catenoid then we consider the current
% \[
% T= \a{\bG_{u}\res (U\times \R)} + \boldsymbol{s}_\sharp\a{\bG_{u}\res U\times \R}\]  
% and if it is the floating disk then we consider the varifold
% \[
% V=\a{\bG_{u}\res (U\times \R)} \cup \boldsymbol{s}_\sharp\a{\bG_{u}\res U\times \R}\cup \bigcup_{l\le k} \a{\underline{B}_k\times \{0\}}
% \]
Finally we observe that, since $\lambda=\lambda \circ \boldsymbol{s}$, then both $\a{\bG_u\res(U\times \R)}$ and $\boldsymbol{s}_\sharp\a{\bG_u\res(U\times \R)}$ are calibrated by $\omega$ and $\boldsymbol{s}^\sharp\omega$ in the metric $g=\lambda^2 \delta_{\R^{m+1}}$.

\subsection{Stationarity}\label{subsec.minimality}
Since stationarity is a local property, we can use a partition of unity of to check it: Consider a open covering of $\R^{m+1}$ by $\{B_k\times (-\frac1k,\frac1k)\}_k$ and $V= \R^m\setminus \bigcup_k \left(\frac34\overline{B_k}\times[-\frac{1}{2k}, \frac{1}{2k}]\right)$ with subordinate partition of unity $\{\theta_k\}_k \cup \{\theta_0\}$. 

Given a smooth vector field $X$ compactly supported in $\R^{m+1}\setminus \{0\}$ consider its finite sum decomposition 
\[ X = \theta_0X + \sum_{k<N} \theta_k X = X_0 + \sum_{k <N} X_k\,. \]
The minimality follows if we show that the first variation vanishes for each of these vector fields separately. 

Firstly, since $\spt(X_0) \cap \bigcup_k \left(\frac34\overline{B_k}\times[-\frac{1}{2k}, \frac{1}{2k}]\right) = \emptyset$ there exists a $t_0>0$ such that for all $|t|<t_0$ $\spt( {\phi_t}_\sharp \bG_u) \subset \{y \colon \dist(y,\bG_{u})<\delta\,|x|^p\}$ and $\spt( {\phi_t}_\sharp \boldsymbol{s}_\sharp \bG_u) \subset \{y \colon \dist(y,\boldsymbol{s}_\sharp\bG_{u})<\delta\,|x|^p\}$. Hence using that  $\spt(\partial \bG_u) \cap \spt(X_0) =\emptyset$ and $\bG_u$, $\boldsymbol{s}_\sharp \bG_u$ are calibrated by $\omega$ and $\boldsymbol{s}^\sharp\omega$ respectively we deduce
\[ \norm{{\phi_t}_\sharp \bG_u}-\norm{\bG_u} \ge ({\phi_t}_\sharp\bG_u - \bG_u)(\omega)\ge 0 \]
and the same for $\operatorname{s}_\sharp \bG_u$. This shows the  stationarity on $\spt(X_0)$. 

Secondly, since for any $k$ we have $T\res \bC_k = {\eta_k^{-1}}\rho \operatorname{Cat}$ in case of the catenoid, we deduce that $T$ is  stationary with respect to variations of $X_k$.

Thirdly, since for any $k$ we have $V\res \bC_k = {\eta_k^{-1}}\Sigma_{\overline{\rho}_k}$ in case of the floating disk we deduce that $V$ is stationary with respect to variations of $X_k$. 

Thus in summary, we conclude the stationarity of the varifold in both theorems.

\subsection{Stability in case of floating disks}\label{subsec.stability} 
Let $K$ be any compact set in $\R^{m+1}\setminus \left(\{0\} \cup \bigcup_k \partial \underline{B}_k \times \{0\} \right)$. Then 
\begin{equation}\label{e:formula-varifold}
 V \res K = \left( (\bG_u +\boldsymbol{s}_\sharp \bG_u )\res K) \right) +  \sum_{k \le N} \left(\a{\underline{B}_k\times \{0\} }\res K\right)\,.
\end{equation}
Here with a slight abuse of notation we use $\bG_u$ for the {\em varifold} induced by the graph\footnote{ Following \cite{Simon} we consider an integer rectifiable varifold as a nonnegative Radon measure induced by the Hausdorff measure restricted on a rectifiable set and weighted with an integer-valued Radon function. This explains the plus sign in front of $\boldsymbol{s}_\sharp \bG_u$ in the formula \eqref{e:formula-varifold}, rather than the minus sign in the previous formulae, when we were dealing with the currents.}.
The support of each of the summands in \eqref{e:formula-varifold} has a positive distance from the union of the supports of the other summands. Furthermore, the first two summands is varifolds induced by graphs, and the others are induced by flat embedded disks. 
Additionally, regarding each summand as a current, their boundaries all vanish on $K$ 
\[ (\partial \bG_u)\res K =-(\partial \boldsymbol{s}_\sharp\bG_u)\res K =0 \text{ and } (\partial \a{\underline{B}_k\times \{0\}}) \res K=0\,.\]
Arguing as above for the minimality we deduce that the summands $\bG_u$, $\boldsymbol{s}_\sharp\bG_u$ are stable varifolds in $K$, and since each disk is minimizing with respect to its boundary, they are stable in $K$ as well. Hence $V\res K$ is a stable immersion in the sense of \cite{Bellettini2025}.
%{\color{orange} Where do we discuss about the possibility to orient $\Gamma$ in the case we choose the catenoid as a model.}

%\section{Comments}{\color{orange} higher codimension, optimality, completeness, blow-up of the Ricci curvature in 2D, classification of minimal surfaces asymptotic to two non-identical parallel planar ends.\\

%Comments concerning completeness: 
%Since ``geodesic completeness'' asks only that geodesics exists for all time, our metric should be even in 2d complete. 

%If I am not wrong in the specific case of
%\begin{enumerate}
  %  \item $g=\lambda^2(x,t) \delta_{\R^{m+1}}$
  %  \item $\lambda$ is symmetric in the $t$ variable i.e. $\lambda(x,t)=\lambda(x,|t|)$
  %  \item $\lambda \in C^1(\R^{m+1})$ with $\partial_{x^i}\lambda(x,t) \in C^1$ for $i<m$
%\end{enumerate}
%the geodesics are indeed unique. Right now I need the symmetry assumption to handle geodesics $\gamma(t)$ with $e_{m+1} \cdot \gamma(0)=0$ and $e_{m+1} \cdot \dot{\gamma}(0)=0$.

%}

\appendix
\section{A variant of the Poincare lemma}
In this section we present a variant of the classical Poincar\'e lemma  for differential forms. This formula allows us to get better estimates for the metrics in our constructions in a portion of the domain, compared to the usual one. Since the final regularity of the metrics $g$ in both theorems is driven by the worse estimates in other regions, we could in fact dispense from this variant and use the more classical one. We however believe that the idea has an independent interest. It exploits Cartan's homotopy formula and the relation between the Lie-derivative and the exterior differential, i.e.  
\begin{equation}\label{eq.Lie derivative}
    \mathcal{L}_Y\omega = d\left(i_Y\omega\right)+i_Y(d\omega)
\end{equation}
where $i_Y\colon\Lambda^k(M) \to \Lambda^{k-1}(M)$ is the contraction of the differential form $\omega$ with the vector field $Y$.
\begin{lemma}\label{lem.poincare}
Given a smooth vector field $Y$, assume that the associated flow $\phi_t$ exists for all $t \in (a,0]$. Then the linear operator $I^Y\colon \Lambda^k(M) \to \Lambda^{k-1}(M)$ given by 
\begin{equation}\label{eq.poincare formula}
I^Y\omega = \int_a^0 \phi_t^\sharp (i_Y\omega)\, dt
\end{equation}
satisfies 
\begin{equation}\label{eq.poincare relation}
    dI^Y\omega + I^Yd\omega= \omega- \phi^\sharp_a\omega\,.
\end{equation}
\end{lemma}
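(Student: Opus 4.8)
The plan is to derive \eqref{eq.poincare relation} directly from Cartan's homotopy formula \eqref{eq.Lie derivative} by integrating in the flow parameter, exactly as one does in the standard proof of the Poincar\'e lemma, but keeping track of the boundary term at $t=a$ which does not vanish here because we only flow on the finite (or semi-infinite) interval $(a,0]$.

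\begin{proof}
First recall the basic identity relating the pullback under a flow to the Lie derivative: for any smooth $k$-form $\omega$ and any $t$ in the interval of existence,
\begin{equation}\label{eq.flow-Lie}
\frac{d}{dt}\,\phi_t^\sharp \omega = \phi_t^\sharp\bigl(\mathcal{L}_Y\omega\bigr)\,.
\end{equation}
(This is immediate from $\phi_{t+s} = \phi_t\circ\phi_s$ together with the definition $\mathcal{L}_Y\omega = \frac{d}{ds}\big|_{s=0}\phi_s^\sharp\omega$, and the fact that pullback is linear and commutes with composition.) Now substitute Cartan's formula \eqref{eq.Lie derivative} into \eqref{eq.flow-Lie}:
\[
\frac{d}{dt}\,\phi_t^\sharp\omega = \phi_t^\sharp\bigl(d\,i_Y\omega\bigr) + \phi_t^\sharp\bigl(i_Y\,d\omega\bigr) = d\bigl(\phi_t^\sharp i_Y\omega\bigr) + \phi_t^\sharp\bigl(i_Y\,d\omega\bigr)\,,
\]
where in the last step we used that pullback commutes with the exterior derivative. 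Integrating this identity in $t$ over $(a,0]$ and using the fundamental theorem of calculus on the left-hand side gives
\[
\omega - \phi_a^\sharp\omega = \int_a^0 d\bigl(\phi_t^\sharp i_Y\omega\bigr)\,dt + \int_a^0 \phi_t^\sharp\bigl(i_Y\,d\omega\bigr)\,dt\,.
\]
Finally, since the exterior derivative is a linear operator acting only in the spatial variables, it commutes with integration in the parameter $t$, so the first term on the right equals $d\int_a^0 \phi_t^\sharp i_Y\omega\,dt = d\,(I^Y\omega)$; by definition the second term is $I^Y(d\omega)$. This is precisely \eqref{eq.poincare relation}.
\end{proof}

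The only genuinely delicate points are the justification of differentiating under the integral sign (exchanging $d$ with $\int_a^0 \cdot\, dt$) and of \eqref{eq.flow-Lie}, both of which are standard once one knows $\phi_t$ is smooth jointly in $(t,\cdot)$ on the stated interval — which is guaranteed by the hypothesis that the flow exists for all $t\in(a,0]$ and smooth dependence on initial conditions. In the applications within the paper (Step 2 of Proposition \ref{prop:localglue}, with $Y=\partial_\tau$ and $\phi_t(x,\tau)=(x,e^t\tau)$ on $t\in(-\infty,0]$) one moreover wants the quantitative estimate that $I^Y\sigma$ inherits the $\tau^2$- and $\tau$-structure of $\sigma$ with the same $\boldsymbol{\delta}$-bounds on derivatives; this follows by inspecting the formula \eqref{eq.poincare formula}, since $\phi_t^\sharp$ multiplies the coefficient of $\tau^j$ by $e^{jt}$ (after contraction with $\partial_\tau$ the powers drop by one but remain $\ge 1$), and $\int_{-\infty}^0 e^{jt}\,dt = 1/j < \infty$, so no derivatives are lost and the constants only improve. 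I do not expect any real obstacle here: the lemma is a bookkeeping variant of a classical fact, and its proof is three lines of commuting operators plus the fundamental theorem of calculus.
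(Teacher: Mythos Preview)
Your proof is correct and follows exactly the same route as the paper's: Cartan's formula combined with $\frac{d}{dt}\phi_t^\sharp\omega=\phi_t^\sharp\mathcal{L}_Y\omega$ and the fundamental theorem of calculus. The paper just writes the chain of equalities in the reverse order (starting from $dI^Y\omega+I^Yd\omega$ and simplifying), but the content is identical.
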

\begin{proof}
This is a direct computation: 
\begin{align*}
     dI^Y\omega + I^Yd\omega = \int_a^0 \phi_t^\sharp\left( d\left(i_Y\omega\right)+i_Y(d\omega)\right)\,dt= \int_a^0 \phi_t^\sharp\left( \mathcal{L}_Y\omega\right)\,dt=\int_a^0 \frac{d}{dt} \phi_t^\sharp\omega\,dt= \omega- \phi^\sharp_a\omega\,.
\end{align*}
\end{proof}
Note that the classical Poincar\'e lemma on a starhaped domain $U\subset \R^n$ is in fact the particular case of the above formula given when the vector field $Y(x)$ is $x$ (assuming that $U$ is star-shaped with respect to the origin). The flow is then given by $\phi_t(x)=e^{t}x$,  i.e. it exists for all $t<0$, and we may choose $a=-
\infty$ in \eqref{eq.poincare formula} to conclude the classical statement.

\bibliographystyle{plain}
\bibliography{lit}
\end{document}